\documentclass[11pt,letterpaper]{amsart}
\usepackage{graphicx}
 \usepackage{stmaryrd}
\usepackage{color}
\def\({\left (}
\def\){\right )}
\def\<{\left\langle}
\def\>{\right\rangle}

 \newtheorem{thm}{Theorem}[section]

\newtheorem{lem}[thm]{Lemma}

\newcommand{\abs}[1]{\left\vert#1\right\vert}

\newcommand{\R}{\mathbb R}

\newcommand{\pfrac}[2]{\frac{\partial #1}{\partial #2}}

\begin{document}
\title{ Uniqueness of the weak solutions of   the Yang-Mills flow in  four dimensions}

\numberwithin{equation}{section}

\author{Chak Hoi Chan }
\address{Department of Mathematics, The University of Queensland, Brisbane,
QLD 4072, Australia}
\email{c.h.chan@uq.net.au}

 \author{Min-Chun Hong}
\address{Department of Mathematics, The University of Queensland, Brisbane,
QLD 4072, Australia}
\email{hong@maths.uq.edu.au}

\begin{abstract}We  improve a parabolic version of Uhlenbeck's gauge fixing theorem for the Yang-Mills flow. This allows us to prove   new existence of   weak solutions of the Yang-Mills flow having with better  properties than Struwe's weak solution on four manifolds in \cite {St2}. Finally, we apply the new parabolic version of Uhlenbeck's gauge fixing theorem to solve the Struwe problem  in \cite {St2} and prove uniqueness of  weak solutions to the Yang-Mills flow on compact four-manifolds with initial  value $A_0 \in W^{1, 2}(M)$.
\end{abstract}

\makeatletter
	\@namedef{subjclassname@2020}{%
	\textup{2020} Mathematics Subject Classification}
	\makeatother
	\subjclass[2020]{	53C071, 58E15} \keywords{Yang-Mills flow, Gauge fixing theorem,  uniqueness}
 \maketitle

 \pagestyle{myheadings} \markright {Uniqueness of weak solutions to the Yang-Mills flow   }

\section{Introduction}

Let $M$ be a compact $n$-dimensional Riemannian manifold without boundary, and let $E\to M$ be a real vector bundle of rank $l$ with compact structure group $G\subseteq SO(l)$. Denote by $P\to M$ the corresponding principal $G$-bundle. The adjoint bundle is
$AdE:=P\times_{{Ad}}\mathfrak g,$
where $\mathfrak g$ is the Lie algebra of $G$. We have $AdE_{x}\simeq\mathfrak g$ (cf. \cite{U2,St2}).
For a connection $D_A$,
the Yang–Mills functional is defined by
\[
\operatorname{YM}(A;M)=\int_M |F_A|^2\,dv,
\]
where
$D_A= D_{ref}+A$ with $A\in \Omega^1(AdE)$, where $D_{ref}$ is a given connection, and $F_A\in\Omega^2(AdE)$ is the curvature of $D_A$.
  A connection $D_A$ is called  Yang-Mills
if   it
satisfies the Yang-Mills equation
 \begin{equation}\label{1.1}D_A^*F_A =0\,,  \end{equation}
where $D_A^*$ is the   $L^2$-adjoint of $D_A$.

The Yang–Mills functional is invariant under a gauge transformation
$S\in G$. The Yang-Mills equation (\ref{1.1}) is not elliptic until   fixing a Coulomb gauge $S\in G$ satisfying $d^*S^*(A)=0$, so it is important to establish a gauge fixing theorem.

In   \cite {U2}, Uhlenbeck  established  the well-known gauge fixing theorem in the following:

\medskip
 \noindent{\bf Uhlenbeck's gauge fixing theorem}.  {\it   Let $D_A=d+A$ be a connection  in the  unit ball $B^n\subset \R^n$  with $A\in W^{1, \frac n 2}(B^n)$ satisfying
 \[\int_{B^n} |F_A|^{n/2}dv \leq \varepsilon   \] for a sufficiently small $\varepsilon >0$.
 Then, there is a  gauge transformation $S\in W^{2, 2}(B^n; G)$ such that $D_A$ is gauge-equivalent to
$D_{a}:=S^{-1}\circ D_A \circ S$   with $a=S^*(A)=S^{-1} dS+S^{-1}AS$
 satisfying
 \[d^*  a =0  \mbox{ in }B^n,  \quad  a \cdot \nu=0  \mbox { on } \partial  B^n,
 \]
  where $\nu$ is the outward normal of $\partial B^n$,  and
\begin{equation*}\label{U2}\int_{B^n}  |a|^{n/2}+|\nabla a |^{n/2}\,dv\leq C \int_{B^n} |F_{a}|^{n/2}\,dv. \end{equation*}
}

In Yang-Mills theory, the  gauge fixing theorem of Uhlenbeck in \cite {U2} has
played an important role in studying the compactification of moduli spaces of Yang-Mills connections. When $n=4$, Donaldson \cite {D1} successfully   applied   the gauge fixing theorem and  the removal  singularity theorem in \cite{U1}    to study four dimensional geometric
topology. In higher dimensions (i.e.  $n\geq 5$), Tian \cite{Ti}  applied  the gauge fixing  theorem to prove that a sequence of smooth Yang-Mills connections with a uniform $L^{2}$-bound   on  the curvature
  converges, modulo gauge transformations, to
a Yang-Mills connection in   the  smooth topology outside a closed singular set and
 that the concentration set is $(n-4)$-rectifiable. In \cite{TTi},   Tao-Tian  improved   Uhlenbeck's gauge fixing theorem   under the condition of small BMO norms
  and  proved a singularity
removal theorem for  stationary admissible Yang-Mills connections in higher dimensions (see also \cite {R}).

\medskip

In this paper, we   study the Yang-Mills flow, which was introduced by Atiyah and Bott \cite {AB}.
 The Yang-Mills flow equation is
 \begin{equation}\label{YMF} \frac {\partial A}{\partial t}=-D_A^*F_A,  \end{equation}
 with initial value $A(0)=A_0$, where $D_{A_0}= D_{ref} +A_0$ is a given
 connection.

In holomorphic vector
bundles  over K\"ahler manifolds,  Donaldson \cite {D2} applied the Yang-Mills flow to prove
  that an irreducible holomorphic vector
bundle  on K\"ahler manifolds  admits a unique
Hermitian-Einstein connection if and only if  the holomorphic vector bundle is stable. We refer to   see   further applications in \cite {Si} and \cite {Se} on the Yang-Mills flow  in holomorphic vector
bundle  on K\"ahler manifolds.

 It is interesting to  establish   existence of the Yang-Mills flow in a vector bundle $E$ over  a Riemannian manifold $M$. In lower dimensional manifolds (i.e. $n=2$ or $3$), R\aa de \cite {Ra} proved  the global existence of smooth solutions to the Yang-Mills flow in $M\times [0,\infty)$.
In the case of $n=4$,  Struwe \cite {St2}  proved existence of  weak solutions $D_A= D_{ref} +A$ of the Yang-Mills flow with initial value $A_0\in W^{1,2}(\Omega^1(M))$ in the following weak sense:
 \begin{align}&A\in C^0\left ( [0, T_1); L^2( \Omega^1(Ad \,E ))\right )\cap H^{1,2}\left ([0, T_1); L^2(\Omega^1(Ad \,E ))\right ),\label{1.3} \\&	
		 F_A\in C^0\left ( [0, T_1); L^2(\Omega^2(Ad \,E ))\right ),\\
&\int_0^{T_1}\int_M \left [\left <\frac {d A} {dt} ,\phi \right >+\left <F_A,D_A\phi  \right >\right ]\,dv\,dt=0
	\end{align}
for any $\phi\in C^{\infty}\left([0, T_1]; \Omega^1 (Ad E)\right)$ vanishing near $t=0$ and $t=T_1$, where   $T_1>0$ is  the  maximal  existence time; i.e. there exist  a uniform small $\varepsilon_0>0$ and a finite number of  singular points  $\{x_i\}_{i=1}^J\in M$ such that
\begin{equation}\label{1.6}
\limsup_{t \to T_1}  \int_{B_R(x_i)} |F_A(x, t)|^{2}\,dv \geq \varepsilon_0
\end{equation}
for any $R\in (0, R_0]$ with some $R_0>0$. Moreover, for each $t\in (0, T_1)$, the weak  solution $A(t)$ is gauge to a smooth solution of the Yang-Mills flow.

With  Casey in \cite{CCH}, we   developed  a parabolic formulation of the gauge fixing theorem for the Yang-Mills flow and proved  the maximal existence of   weak solutions to the Yang-Mills flow with  initial data $A_0\in W^{1,\frac n 2}(M)$ in higher dimension (i.e. $n\geq 4$). We must point out that our proofs in \cite{CCH} are different from the one by Struwe in  \cite {St2}.

  It is a natural question to ask whether weak solutions of the Yang-Mills flow are unique or not.
For $n\geq5$, Gastel \cite{Ga} showed non-uniqueness of the Yang–Mills flow.
 When $n=4$, Struwe \cite {St2} proved   uniqueness of  weak solutions of the Yang-Mills flow under an extra  condition that $A_0$ is irreducible; i.e. for all $s\in \Omega^0(Ad E)$,
\begin{equation}
\|s\|_{L^2(M)}\leq C \|D_{A_0} s\|_{L^2(M)}.\label{IRD}
\end{equation}
In general case of vector bundles $E$ over $M$, the irreducible condition  \eqref{IRD} are not satisfied. Therefore,
 it has been  an open problem about the uniqueness of  weak solutions of the Yang-Mills flow in four manifolds on the initial data  $A_0\in W^{1,\,2}(M)$ without the irreducible condition \eqref{IRD}. This problem was again mentioned in \cite {W}.

In this paper, we solve the above uniqueness problem for weak solutions of the Yang-Mills flow   and  prove
\begin{thm}\label{Theorem 1.2} When $n=4$, let $D_{A_0}$  be a connection with     $A_0\in W^{1,\, 2}(M)$. Then,   the  weak solutions,    which were obtained by Struwe in \cite{St2},    of the Yang-Mills flow (\ref{YMF}) with initial value $A_0\in W^{1, \,2}(M)$ are unique. \end{thm}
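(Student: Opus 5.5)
The plan is to compare two weak solutions $A$ and $B$ of (\ref{YMF}) of Struwe's type, both starting from $A_0\in W^{1,2}(M)$, and to show $A=B$ on a short time interval $[0,T]$. A continuation argument then covers all of $[0,T_1)$, because for $t>0$ each weak solution is gauge equivalent to a smooth one, and smooth solutions are unique.

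The first step replaces Struwe's irreducibility condition \eqref{IRD} by a parabolic form of Uhlenbeck's gauge fixing. Near $t=0$ the curvature energy $\int_{B_R(x)}|F_A(x,t)|^2\,dv$ stays below $\varepsilon_0$ on small balls, uniformly for $t\le T$, since $F_A\in C^0([0,T_1);L^2)$. So on each small ball we can choose a gauge $S(t)$ in which the connection satisfies a Coulomb-type condition with $W^{1,2}$ control, uniformly in time. The gauges are built so that $\partial_t S$ is controlled. In this gauge the flow becomes the strictly parabolic DeTurck-type system
\[
\partial_t a=-D_a^*F_a-D_a d^*a .
\]
Its solution lies in the space $L^\infty_tW^{1,2}\cap L^2_tW^{2,2}$, with $\partial_t a\in L^2_tL^2$.

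The key quantitative point is that the norm $\|a\|_{L^2_tW^{2,2}}+\sup_t\|a\|_{W^{1,2}}$ on $[0,T]$ tends to $0$ as $T\to 0$, after subtracting the caloric extension of the initial data. This holds because the caloric extension of a $W^{1,2}$ datum has small $L^4_tL^4_x$-type Strichartz norms on short intervals, since $W^{1,2}\subset L^4$ in four dimensions. I would then carry this out for both $A$ and $B$. Composing with the gauges produces two solutions $a,b$ of the DeTurck system with the same initial data, modulo a $W^{2,2}$ gauge, which can be normalised at $t=0$.

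The second step is a difference estimate. Set $w=a-b$; then
\[
\partial_t w+\Delta w=w*\nabla(a,b)+(a,b)*\nabla w+(a,b)^2*w ,
\]
where $\Delta$ denotes the Hodge Laplacian, so that $\partial_t w+\Delta w=0$ is the heat equation. Testing with $w$ and using $\|fg\|_{L^2}\le\|f\|_{L^4}\|g\|_{L^4}$ together with Ladyzhenskaya/Sobolev inequalities on $M^4$ gives
\[
\frac{d}{dt}\|w\|_{L^2}^2+\|\nabla w\|_{L^2}^2\le C\bigl(\|\nabla a\|_{L^4}^2+\|\nabla b\|_{L^4}^2+\|a\|_{L^\infty}^2+\dots\bigr)\|w\|_{L^2}^2 .
\]
Here $\|\nabla a\|_{L^4}^2\le C\|a\|_{W^{2,2}}^2$ is integrable in time. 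Gronwall's inequality then gives $w\equiv0$.

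Where the naive bound fails (the $L^\infty$ term), I would instead absorb that term using the smallness of $\|a\|_{L^4_tL^\infty}$-type norms from the first step. After that, the gauges must be undone: the difference of the gauge transformations satisfies an ODE $\partial_t S=S\,d^*a$-type relation with equal initial values, so $S_A=S_B$ and hence $A=B$.

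The main obstacle is the first step, namely producing gauges along the flow with time regularity that keep the gauged solutions in the class $L^2_tW^{2,2}$ uniformly down to $t=0$ without irreducibility. Struwe used \eqref{IRD} to control $d^*$ of the gauge globally. Here I would try to patch local Uhlenbeck gauges on small balls, using the improved parabolic gauge theorem, and glue them via the $\varepsilon$-regularity. Making the gluing compatible across time, and uniform as $t\to0$, is where I expect the difficulty to lie.
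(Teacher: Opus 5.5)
There is a genuine gap, and it sits at the step you yourself flag as the main obstacle. That step is essentially the whole content of the theorem.

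\textbf{Step 1 does not work as stated.} The DeTurck system $\partial_t a=-D_a^*F_a-D_a d^*a$ is a \emph{global} equation. It comes from a gauge $S(t)$ with $S^{-1}\partial_t S=-d^*(S^*A)$. Since $d^*(S^{-1}dS)$ contains $S^{-1}\Delta S$, this is a nonlinear heat equation for $S$, of harmonic-map type into $G$. Its coefficient is the weak solution $A$ itself, about which you only know $A\in C^0L^2\cap H^{1,2}L^2$, $F_A\in C^0L^2$, and smoothness up to gauge for $t>0$. It also sits at the critical regularity $\nabla S\in W^{1,2}\subset L^4$. You give no argument that it can be solved on a uniform interval with $S^*A\in L^\infty_tW^{1,2}\cap L^2_tW^{2,2}$ down to $t=0$.

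Local Uhlenbeck gauges do not supply this. In a Coulomb gauge $d^*a_i=0$ the DeTurck term vanishes, but the flow reads $\partial_t a_i=-D_{a_i}^*F_{a_i}+D_{a_i}s_i$, with $s_i=S_i^{-1}\partial_tS_i$ fixed by an elliptic constraint. Moreover, the gauges on different balls are unrelated. So there is no single parabolic system on $M$ to which your energy/Gronwall argument for $w=a-b$ applies, and ``gluing via $\varepsilon$-regularity'' does not address this.

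\textbf{How the paper avoids a global gauge.} It keeps the local Coulomb gauges of Theorem~\ref{Theorem 4.2} and Lemma~\ref{Lemma 4.1}, normalised so that $\int_{U_i}s_i\,dv=0$ and $\nabla s_i\in L^2_{t,x}$. It then runs Gronwall on the curvature difference $\mathcal E(t)=\sum_i\int_{U_i}|F_{a_i^2}-F_{a_i^1}|^2dv$ rather than on potentials. The ingredients your proposal lacks are these:
\begin{enumerate}
\item[(a)] The orthogonality $\int_{U_i}\langle\partial_tb_i,d\sigma_i\rangle\,dv=0$, which follows from $d^*b_i=0$ and $b_i\cdot\nu=0$. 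It bounds $\|d\sigma_i\|_{L^2}$ by $\mathcal D$ and $\mathcal E$, and hence bounds $\|\sigma_i\|_{L^4}$ via Poincar\'e.
\item[(b)] A cut-off interior estimate for $F_{a_i^2}-F_{a_i^1}$.
\item[(c)] Recovery of the cut-off region from neighbouring balls through the transition functions. Here $H_{ij}=S_{ij;2}-S_{ij;1}$ satisfies an ODE whose $s^2$-terms drop out by skew-symmetry, giving $\|H_{ij}(t)\|_{L^4}^2\le Ct\int_0^t\mathcal P$.
\item[(d)] The time-weighted bounds of Lemma~\ref{Lemma 4.2}. These give $\int_0^t\Lambda\mathcal H\le C\bigl(\int_0^t\tau\Lambda\bigr)\bigl(\int_0^t\mathcal P\bigr)$ with $\int_0^t\tau\Lambda\to0$.
\end{enumerate}
This mechanism is what replaces your unspecified gluing step.

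\textbf{Minor point on Step 2.} The $L^4_tL^\infty_x$ norm you fall back on is not controlled by $L^\infty_tW^{1,2}\cap L^2_tW^{2,2}$ in dimension four. Instead, estimate $\int|b||\nabla w||w|\le\|b\|_{L^8}\|\nabla w\|_{L^2}\|w\|_{L^{8/3}}$. Then use the Gagliardo--Nirenberg bound $\|b\|_{L^8}^4\le C\|b\|_{W^{1,2}}^2\|b\|_{W^{2,2}}^2$, which lies in $L^1_t$, and no smallness is needed. So your difference estimate would be fine in that class; everything hinges on getting both solutions into it, which the proposal does not do.
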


Let $A_0$ be an initial connection satisfying  $A_0\in W^{1, 2}(M)$ satisfying
 \begin{equation*}
\sup_{x\in M} \int_{B_{r_0}(x)}|F_{A_0}|^2\,dv\leq \varepsilon_0
 \end{equation*} for a sufficiently small $\varepsilon_0$ and some $r_0>0$. For the compact manifold $M$,   there are a finite number of points $y_i\in M$, $i=1,\cdots,L$,  such that $\{U_i=B_{r_0}(y_i)\}_{i=1}^J$ covers $M$.  Moreover, for each $x\in M$ there are at most finite number $K$  of nonempty intersections of  different  balls $U_j$.  By Uhlenbeck's gauge fixing theorem,  for each  ball $U_i$, there exist    a   gauge
transformation  $S_i(0)\in W^{2,2}(U_i; G)$    and  a  connection
$D_{a_i(0)}=S_i^*(0)(D_{A_0})=d+a_i(0)$ such that
 \begin{equation*}   d^*  a_i(0)=0 \quad \mbox {in } U_i,\quad a_i(0)\cdot \nu=0 \mbox { on }\partial  U_i. \end{equation*}
    In order to prove Theorem \ref{Theorem 1.2},  we need to establish a parabolic   gauge fixing theorem for weak solutions of  the Yang-Mills flow    with non-smooth initial data $A_0\in W^{1, 2}(M)$. However, the parabolic  gauge fixing theorem for the Yang-Mills flow  in \cite{CCH} is not good enough to prove uniqueness of weak solutions.
In order to overcome the difficulty, we apply  Gaffney's inequality (see, e.g., \cite{G1}, \cite{F}) to improve  the parabolic gauge fixing result  in \cite{CCH}  by  establishing new existence of weak solutions with better properties in the following:

\begin{thm}\label{Theorem 4.2}  When $n=4$, let $D_{A_0}$  be a connection with   $A_0 \in W^{1, 2}(M)$. Then
there exists a local weak solution $A(t)$ of the Yang-Mills flow (\ref{YMF}) in $M\times [0, t_1]$ with initial connection $A_0$ with some $t_1>0$  satisfying
 \begin{equation}\label{6.1}
\sup_{0\leq t\leq t_1} \int_{U_i}|F_{A }(\cdot, t)|^2\,dv\leq \varepsilon.
 \end{equation}
Then,
   for each  ball $U_i$, there exist    a gauge
transformation  $S_i(t)\in W^{1,2}(U_i; G)$    and  a  connection
$D_{a_i}=S_i^*(D_{A})=d+a_i$ such that
 \begin{equation}\label{1.9}  d^*  a_i(t)=0 \quad \mbox {in } U_i,\quad a_i(t)\cdot \nu=0 \mbox { on }\partial  U_i \end{equation} for each $t\in [0, t_1]$  and \begin{equation}\label{1.11}
    \sup_{0\leq t\leq t_1}
    \int_{U_i}r_0^{-2} \abs{a_i(t)}^2 +\abs{\nabla  a_i(t)}^2 dv \leq C\varepsilon.
    \end{equation}
 Moreover, $D_{a_i}$  is a smooth solution of the equation
  \begin{equation}\label{1.10}
    \pfrac{a_i}{t}=-D_{a_i}^*F_{a_i}+D_{a_i} s_i\quad \mbox{in $ U_i\times (0,t_1]$,}
     \end{equation}
     where
      \begin{equation}\label{s_i}s_i(t)=S_i^{-1}(t)\circ \frac d{dt} S_i(t)\in W^{1,2}(U_i), \quad \int_{U_i}s_i(x,t)\,dv=0.  \end{equation}
  Moreover, we have
  \begin{equation}\label{1.12}
  \begin{aligned}
  \int_0^{t_1}\int_{U_i}\left|\frac{\partial a_i}{\partial t}\right|^2+ \abs{\nabla^2a_i }^2 +|\nabla s_i|^2dvdt  \leq C\left(1+\frac {t_1}{r_0^2}\right)\int_M|F_{A_0}|^2\, dv.
  \end{aligned}
  \end{equation}
  \end{thm}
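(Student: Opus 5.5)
We first approximate $A_0$ by smooth connections $A_{0,k}\to A_0$ in $W^{1,2}(M)$, with $\sup_x\int_{B_{r_0}(x)}|F_{A_{0,k}}|^2\le 2\varepsilon_0$. For each $k$ we consider the smooth Yang--Mills flow $A_k(t)$ starting at $A_{0,k}$. Struwe's local energy inequality, obtained by multiplying the curvature evolution $\partial_t F+\Delta_A F=F\# F$ by a cutoff $\varphi^2$, gives a uniform time $t_1\sim \varepsilon_0 r_0^2/E_0$ on which (\ref{6.1}) holds for all $k$. Here $E_0=\int_M|F_{A_0}|^2$. On the same interval we get the uniform bound
\[
\int_0^{t_1}\!\!\int_{U_i}|D_{A_k}^*F_{A_k}|^2+|\nabla_{A_k}F_{A_k}|^2\le C(1+t_1r_0^{-2})E_0 .
\]
This uses the global energy identity together with the Bochner/Weitzenb\"ock formula and the smallness of the local $L^2$ curvature. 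The smallness absorbs $\int |F|^3$ via Sobolev: $\|F\|_{L^4}\lesssim\|\nabla_A F\|_{L^2}+r_0^{-1}\|F\|_{L^2}$ on each ball.

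Next we gauge fix for each $t$. By (\ref{6.1}) and Uhlenbeck's theorem, for each $t\in[0,t_1]$ there is $S_{i,k}(t)$ with $a_{i,k}=S_{i,k}^*(A_k)$ Coulomb, satisfying (\ref{1.9}) and the scaled estimate (\ref{1.11}). Smooth dependence on $t$ comes from the implicit function theorem applied to the elliptic Coulomb problem near a given gauge; it can also be arranged by continuity in $t$ from the initial gauge $S_i(0)$, since the Coulomb gauge with small norm is locally unique. Differentiating $a=S^{-1}dS+S^{-1}AS$ in $t$ gives (\ref{1.10}) with $s_i=S_i^{-1}\partial_tS_i$. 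We normalize $\int_{U_i}s_i=0$, which uses the residual constant gauge freedom.

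The estimate (\ref{1.12}) is the core of the argument. Apply $d^*$ to (\ref{1.10}). Since $d^*a_i=0$, we get $d^*D_{a_i}s_i=d^*D_{a_i}^*F_{a_i}$. The right-hand side equals $-d^*(*[a_i,*F_{a_i}])$-type terms because $d^*d^*=0$. With the Neumann condition $\partial_\nu s_i=-[a_i\cdot\nu,s_i]+\dots=$ boundary terms coming from $a_i\cdot\nu=0$, this is an elliptic Neumann problem
\[
\Delta s_i+d^*[a_i,s_i]=d^*([a_i,F_{a_i}]),
\]
where $\int s_i=0$. Testing with $s_i$ and using $\|a_i\|_{L^4}\le C\sqrt\varepsilon$ to absorb $[a_i,s_i]$, we get
\[
\|\nabla s_i\|_{L^2}\le C\|a_i\|_{L^4}\|F_{a_i}\|_{L^4}\le C\sqrt\varepsilon\,\|F\|_{L^4},
\]
whose square is integrable in time by the first step. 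Then $\partial_t a_i$ is controlled in $L^2L^2$ by $|D^*F|+|\nabla s_i|+|a_i||s_i|$.

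For $\nabla^2 a_i$ we use Gaffney's inequality on the ball with $a_i\cdot\nu=0$:
\[
\|\nabla^2 a_i\|_{L^2}\le C(\|\nabla da_i\|_{L^2}+\|\nabla d^*a_i\|_{L^2}+r_0^{-1}\|\nabla a_i\|_{L^2}+\dots),
\]
where $d^*a_i=0$ and $da_i=F_{a_i}-a_i\wedge a_i$. This bounds $\|\nabla^2a_i\|_{L^2}$ by $\|\nabla F\|_{L^2}$, by $\|a_i\|_{L^4}\|\nabla a_i\|_{L^4}\le C\sqrt\varepsilon\|\nabla^2 a_i\|+\dots$, which is absorbed, and by lower-order terms. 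Integrating in time gives (\ref{1.12}) for $a_{i,k}$, uniformly in $k$.

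Finally we pass to the limit $k\to\infty$. The uniform bounds give weak limits $a_i$ and $s_i$, with $S_i\in W^{1,2}$ in $x$ and continuous in $t$. The gauged equations (\ref{1.10}) are parabolic in the Coulomb gauge, so the $a_i$ are smooth for $t>0$. The global $A$ is recovered by patching, $A=S_i^{-1*}a_i$ with transition functions $S_iS_j^{-1}$, and it is a weak solution. The main obstacle is the boundary term and uniformity in the Neumann problem for $s_i$, together with making $t\mapsto S_i(t)$ differentiable with $s_i\in W^{1,2}$ only from the a priori estimate. We would handle the latter via the approximation rather than directly.
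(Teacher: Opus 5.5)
Your outline follows the paper's proof: smooth approximation with a uniform time from the local energy inequality, the integrated bound of Lemma~\ref{Lemma 2}, Coulomb gauges along the smooth flows with the mean of $s_i$ removed by a spatially constant gauge as in \eqref{3.9}, Gaffney's inequality for $\nabla^2 a_i$, then $k\to\infty$. However, your bound $\|\nabla s_i\|_{L^2}\le C\|a_i\|_{L^4}\|F_{a_i}\|_{L^4}$ is false, and the boundary term you flag at the end is the reason. Since $a_i(t)\cdot\nu=0$ for all $t$, also $\partial_t a_i\cdot\nu=0$. So \eqref{1.10} forces the inhomogeneous Neumann condition $\partial_\nu s_i=(D_{a_i}^*F_{a_i})\cdot\nu$ on $\partial U_i$. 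In weak form,
\[
\int_{U_i}\langle D_{a_i}s_i,d\phi\rangle\,dv=\int_{U_i}\langle D_{a_i}^*F_{a_i},d\phi\rangle\,dv\qquad\text{for all }\phi\in W^{1,2}(U_i).
\]
Here the piece $\int_{U_i}\langle d^*F_{a_i},d\phi\rangle\,dv=\int_{\partial U_i}\langle\phi,(d^*F_{a_i})\cdot\nu\rangle\,d\omega$ does not vanish; $d^*d^*F_{a_i}=0$ only removes the interior contribution. Its leading part $(d^*da_i)\cdot\nu$ is linear in $a_i$, so it cannot be controlled by the quadratic quantity $\|a_i\|_{L^4}\|F_{a_i}\|_{L^4}$.

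Concretely, take $G$ abelian. All commutators vanish, and $s_i$ is the harmonic function with Neumann datum $(d^*da_i)\cdot\nu$. In a half-space model this datum equals $-\partial_\nu^2(a_i\cdot\nu)$ (using $d^*a_i=0$ and $a_i\cdot\nu=0$), which is generically nonzero. In this case the correction $ds_i$ exists only to keep $a_i\cdot\nu=0$. Replacing $a_i$ by $\lambda a_i$ and letting $\lambda\to0$ violates your inequality.

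The fix is the paper's argument, which never solves a Neumann problem. The weak form above is just $\int_{U_i}\langle\partial_t a_i,d\phi\rangle\,dv=0$, which follows from $d^*a_i=0$ and $a_i\cdot\nu=0$. Take $\phi=s_i$ and keep $D_{a_i}^*F_{a_i}$ intact rather than integrating it by parts. Cauchy--Schwarz, $\|a_i\|_{L^4}\le C\sqrt\varepsilon$ and $\|s_i\|_{L^4}\le C\|ds_i\|_{L^2}$ (mean zero) then give $\|\partial_t a_i\|_{L^2}^2+\|ds_i\|_{L^2}^2\le C\|D_{a_i}^*F_{a_i}\|_{L^2}^2$. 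The right side is integrable in time by Lemma~\ref{Lemma 2}. With this bound in place of yours, \eqref{1.12} follows as you describe; your Gaffney step for $\nabla^2 a_i$ is fine.

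A lesser point concerns smoothness of $a_i$ on $U_i\times(0,t_1]$. It does not follow merely from \eqref{1.10} being parabolic in Coulomb gauge, because of the nonlocal term $D_{a_i}s_i$ and the critical nonlinearity in dimension four. The paper instead applies the uniform higher-order bounds of Lemma~\ref{Lemma 6} to the approximating flows on $[\delta,t_1]$, plus elliptic estimates in the Coulomb gauge. This gives smooth convergence there.
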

We would like to pint out that two estimates \eqref{s_i}-\eqref{1.12}  has improved the result in \cite{CCH} and  will play a key role in proof of  Theorem \ref{Theorem 1.2}.

 Second key idea of proving Theorem \ref{Theorem 1.2} is to apply Theorem \ref{Theorem 4.2} to   convert boundary estimates $a_i $ on $\partial U_i$ to the interior estimates of $a_j$ in $U_j$ by using the transition behaviors of gauge transformations.
 Let $a_i^k$ with $k=1, 2$ be two weak solutions satisfying \eqref{1.9}-\eqref{1.12}. In the nonempty intersection $U_i\cap U_{j}$, there exist  gauge transformations $S_{ij; k}(t)$  (see Lemma 3.5 of \cite{U2} or \cite{Se})   such that
\begin{eqnarray*}
 a^k_{j}(t)= {S}^{-1}_{ij; k}(t)  d S_{ij; k}(t) + S^{-1}_{ij}(t) a^k_{i}(t)S_{ij; k}(t).
 \end{eqnarray*}
 Then, we apply a delicate proof to estimate the difference of gauge   transformation $S_{ij,2}(t)-S_{ij,1}(t)$ and    curvatures $F_{a_i^2(t)}-F_{a_i^1(t)}$ in $U_i\cap U_{j}$, we can obtain
 \begin{equation*}
\mathcal E(t)
\le C\left(1+tK(t)\right)
\int_0^t h(\tau)\mathcal E(\tau)\,d\tau.
\end{equation*}
where $\mathcal E(t)
:=\sum_i\int_{U_i}
|F_{a_i^2(t)}-F_{a_i^1(t)}|^2\,dv$,   $h(t)\in L^1[0,t_1]$ and  $tK(t)\in L^\infty(0,t_1)$ (see details in Section 4).  Then, Gronwall's inequality implies the  uniqueness.

The paper is organized as follows. In Section 2, we present some necessary   and basic estimates on the Yang-Mills flow. In Section 3, we improve the parabolic gauge fixing theorem and prove Theorem \ref{Theorem 4.2}. In Section 4, we prove Theorem \ref{Theorem 1.2}.

\medskip
\noindent{\bf AI Disclosure:}{
The main idea of this paper comes  from two authors, but is not designed by AI.
The authors used GPT $6.1$-Sol as a tool to assist some mathematical arguments and proofs. The authors    independently developed and verified all proofs, and   take full responsibility for the content of the manuscript.
}

\section{Preliminary results}

In this section, we assume that $n=4$. At first, we  recall some notations and results about the Yang-Mills  flow. For an integer $k\geq 0$, set   $\Omega ^k(Ad \,E ) = \Gamma
(Ad \,E \otimes\Omega ^k(M))$.
In local coordinates,   $a\in \Omega ^k(Ad \,E )$ is  represented   by
$$  a(x) =  \sum_{i_1<\cdots <i_k} a_{i_1, \cdots, i_k}(x)dx_{i_1}\wedge \cdots  \wedge dx_{i_k} $$
with $a_{i_1, \cdots, i_k}(x)\in \mathfrak{g}$.
For $a, b\in \Omega ^k(Ad \,E )$, their inner product $\<a,b\>$  is defined by
$$  \< a, b\> =
    \sum \text{Trace}\(a_{i_1,\cdots, i_k}b^T_{j_1, \cdots, j_k}\)
     \< dx_{i_1}\wedge  \cdots \wedge  dx_{i_k},
      dx_{j_1}\wedge \cdots  \wedge  dx_{j_k}\> , $$
where $b^T_{j_1, \cdots, j_k}$ is the  transpose of   $b_{j_1, \cdots, j_k}$.

Each connection  $D_A=d+A$ is  compatible with the metric structure of $E$. For an exterior derivative
$$D_A : \Omega ^k(Ad\,E) \to \Omega ^{k+1}(Ad \, E) , \;\;  $$
there is the adjoint operator
$$D_A^*: \Omega^{k+1}(Ad E )\to \Omega^{k}(Ad E)$$
satisfying the property
$$ \int _M \<D_Aa, b\>\,dv=\int _M \<a, D_A^*b\>\,dv$$
for all $a\in \Omega^{k}(Ad E)$ and $b\in \Omega^{k+1}(Ad E )$.
With the above fibrewise inner product,   we  can define
$L^2\(\Omega ^p(Ad \,E )\)$ as the completion of $C^\infty $ sections
under these $L^2$ norms.

We recall two well-known   estimates on the Yang-Mills flow (e.g. see,      \cite{St2}):

\begin{lem}  \label{Lemma 1} Let $A(t)$ be a smooth solution of the Yang-Mills flow (\ref{YMF}) in $M\times
 [0,T]$ with initial connection $A(0)=A_0$ for some $T>0$. Then, for $0<s\leq T$, we have
\begin{equation} \label{Energy identity}
  \int_M |F_{A(s)}|^2  \,dv +2
  \int_0^s \int_M  \abs{\pfrac{A}{t}}^2 dv\,dt= \int_M  |F_{A_0}|^2
  \,dv
\end{equation}
and
 \begin{equation} \label{Loca identity}
    \int_{B_{R_0}(x_0)}| F_{A(s)}|^2 \, dv\leq \int_{B_{2R_0}(x_0)} |F_{A_0}|^2
  \,dv + \frac C {R_0^2}\int_0^s  \int_{B_{2R_0}(x_0)} |F_A|^2 \, dv\,dt.
 \end{equation}
\end{lem}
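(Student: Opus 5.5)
We prove Lemma \ref{Lemma 1} by two direct integrations by parts along the smooth flow. The first identity comes from differentiating the Yang--Mills energy in time. The second comes from a localized version of the same computation with a cutoff function, where the only new term is the error produced by the cutoff.

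\emph{Energy identity \eqref{Energy identity}.} For a smooth family $A(t)$ we have $\partial_t F_A = D_A(\partial_t A)$, because the quadratic term $A\wedge A$ differentiates to $D_A$ acting on $\partial_t A$. Hence
\[
\frac{d}{dt}\int_M |F_A|^2\,dv = 2\int_M \<D_A \partial_t A, F_A\>\,dv = 2\int_M \<\partial_t A, D_A^*F_A\>\,dv .
\]
Since $M$ has no boundary, no boundary terms arise. Substituting the flow equation \eqref{YMF}, $D_A^*F_A=-\partial_t A$, gives
\[
\frac{d}{dt}\int_M|F_A|^2\,dv=-2\int_M|\partial_t A|^2\,dv .
\]
Integrating from $0$ to $s$ yields \eqref{Energy identity}.

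\emph{Local estimate \eqref{Loca identity}.} Choose a cutoff $\phi\in C_c^\infty(B_{2R_0}(x_0))$ with $0\le\phi\le1$, $\phi\equiv1$ on $B_{R_0}(x_0)$, and $|\nabla\phi|\le C/R_0$. Repeating the computation with the weight $\phi^2$ gives
\[
\frac{d}{dt}\int_M\phi^2|F_A|^2\,dv = 2\int_M\<D_A\partial_tA,\phi^2F_A\>\,dv = 2\int_M \<\partial_tA, D_A^*(\phi^2F_A)\>\,dv .
\]
The product rule gives
\[
D_A^*(\phi^2F_A)=\phi^2D_A^*F_A - 2\phi\,(\nabla\phi)\lrcorner F_A ,
\]
up to sign conventions. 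Using $D_A^*F_A=-\partial_tA$ we obtain
\[
\frac{d}{dt}\int\phi^2|F_A|^2 = -2\int\phi^2|\partial_tA|^2 + O\Big(\int 4\phi|\nabla\phi||\partial_tA||F_A|\Big).
\]
Apply Young's inequality to the error term:
\[
4\phi|\nabla\phi||\partial_tA||F_A|\le 2\phi^2|\partial_tA|^2+2|\nabla\phi|^2|F_A|^2 .
\]
This absorbs the good term and leaves
\[
\frac{d}{dt}\int\phi^2|F_A|^2\le \frac{C}{R_0^2}\int_{B_{2R_0}(x_0)}|F_A|^2 .
\]
Integrating in time and using $\phi\equiv 1$ on $B_{R_0}(x_0)$ on the left and $\phi\le 1$ at $t=0$ gives \eqref{Loca identity}.

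The only care needed is the sign and constant in the Leibniz rule for $D_A^*$ acting on $\phi^2F_A$. Only the pointwise bound by $|\nabla\phi||F_A|$ matters, so this step is routine, and there is no real obstacle.
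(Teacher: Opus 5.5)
Your proof is correct and is the standard argument. The paper does not prove this lemma but quotes it as well known from Struwe \cite{St2}, where it is obtained the same way: $\partial_t F_A=D_A(\partial_t A)$ plus integration by parts gives the energy identity, and repeating the computation with a cutoff $\phi^2$, absorbing the error term $\phi|\nabla\phi||\partial_t A||F_A|$ by Young's inequality, gives the local estimate.
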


\begin{lem}  \label{Lemma 2} Let $A(t)$ be a smooth solution of the Yang-Mills flow (\ref{YMF}) in $M\times
 [0,T]$ with initial connection $A(0)=A_0$. There exist a sufficiently small constants  $\varepsilon>0$ and $0 < R_0 < 1$ such that if
 \[\sup_{0\leq t\leq T} \max_{x_0\in M} \int_{B_{R_0}(x_0)} |F_A(\cdot ,t)|^{2}dv \leq \varepsilon \]
 Then there exists a constant $C > 0$ such that
 \begin{equation}\label{T2}
    \int_0^T \int_M|\nabla_A F_A|^2\,dv\, dt\leq C\left(1+\frac {T}{R_0^2}\right)\int_M|F_{A_0}|^2\, dv.
\end{equation}
\end{lem}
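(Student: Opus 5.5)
The plan is a localized Bochner-type energy estimate for the curvature. Along the flow (\ref{YMF}) the curvature evolves by $\partial_t F_A = D_A\partial_t A = -D_A D_A^*F_A$. Combined with the Bianchi identity $D_AF_A=0$, this gives
\[
\partial_t F_A + \Delta_A F_A = 0,\qquad \Delta_A := D_AD_A^*+D_A^*D_A .
\]
By the Bochner--Weitzenb\"ock formula, $\Delta_A F_A = \nabla_A^*\nabla_A F_A + F_A\# F_A + \mathrm{Rm}\# F_A$, where $\#$ denotes bilinear expressions with universal coefficients and $\mathrm{Rm}$ is the curvature of $M$. Therefore $\partial_t F_A + \nabla_A^*\nabla_A F_A = F_A\#F_A + \mathrm{Rm}\#F_A$.

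\textbf{Step 1: local energy inequality.} Fix $x_0\in M$ and a cutoff $\phi\in C_c^\infty(B_{R_0}(x_0))$ with $\phi\equiv1$ on $B_{R_0/2}(x_0)$ and $|\nabla\phi|\le C/R_0$. Test the equation with $\phi^2F_A$ and integrate by parts, handling the cross term $2\phi\nabla\phi\cdot\nabla_AF_A\cdot F_A$ with Young's inequality. This should give
\[
\frac12\frac{d}{dt}\int_M|F_A|^2\phi^2\,dv+\frac12\int_M|\nabla_AF_A|^2\phi^2\,dv\le C\int_M|F_A|^3\phi^2\,dv+C\Big(1+\frac1{R_0^2}\Big)\int_{B_{R_0}(x_0)}|F_A|^2\,dv .
\]

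\textbf{Step 2: absorbing the cubic term.} This is the main obstacle, and it is exactly where the smallness hypothesis enters. By H\"older,
\[
\int|F_A|^3\phi^2\le\Big(\int_{B_{R_0}(x_0)}|F_A|^2\Big)^{1/2}\Big(\int|\phi F_A|^4\Big)^{1/2}\le\varepsilon^{1/2}\,\|\phi|F_A|\|_{L^4}^2 .
\]
Next use the scale-invariant Sobolev inequality in dimension four, $\|u\|_{L^4}\le C\|\nabla u\|_{L^2}$ for $u\in W^{1,2}_0(B_{R_0})$, with $C$ independent of $R_0<1$ (after possibly shrinking $R_0$ below the injectivity radius). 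Together with Kato's inequality $|\nabla|F_A||\le|\nabla_AF_A|$ this yields
\[
\|\phi|F_A|\|_{L^4}^2\le C\int|\nabla_AF_A|^2\phi^2+\frac{C}{R_0^2}\int_{B_{R_0}(x_0)}|F_A|^2 .
\]
Choosing $\varepsilon$ small, the gradient term is absorbed into the left side, so that
\[
\frac{d}{dt}\int|F_A|^2\phi^2+\frac12\int|\nabla_AF_A|^2\phi^2\le \frac{C}{R_0^2}\int_{B_{R_0}(x_0)}|F_A|^2 ,
\]
where we used $R_0<1$ to absorb the $\mathrm{Rm}$ term.

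\textbf{Step 3: integrate in time and globalize.} Integrate over $[0,T]$ and drop the nonnegative value of $\int|F_{A(T)}|^2\phi^2$. Then sum over a cover of $M$ by balls $B_{R_0/2}(x_k)$ such that the doubled balls $B_{R_0}(x_k)$ have bounded overlap, with the overlap bound depending only on the geometry of $M$. This gives
\[
\int_0^T\!\!\int_M|\nabla_AF_A|^2\le C\int_M|F_{A_0}|^2+\frac{C}{R_0^2}\int_0^T\!\!\int_M|F_A|^2 .
\]
Finally, the energy identity (\ref{Energy identity}) of Lemma \ref{Lemma 1} gives $\int_M|F_{A(t)}|^2\le\int_M|F_{A_0}|^2$ for every $t$. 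Hence the last term is at most $\frac{CT}{R_0^2}\int_M|F_{A_0}|^2$, which proves (\ref{T2}).
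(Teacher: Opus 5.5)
Your argument is correct. The heat equation $\partial_tF_A+\nabla_A^*\nabla_AF_A=F_A\#F_A+\mathrm{Rm}\#F_A$, the ball-by-ball absorption of $\int|F_A|^3\phi^2$ via Kato and the scale-invariant Sobolev inequality, and the bounded-overlap summation all go through. You also need (\ref{Energy identity}) only in the weak form $\int_M|F_{A(t)}|^2\le\int_M|F_{A_0}|^2$.

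The paper gives no proof of its own; the lemma is quoted from \cite{St2}. The standard argument is elliptic in each time slice rather than parabolic. Since $D_AF_A=0$, integrating the Weitzenb\"ock formula over the closed manifold gives
\[
\int_M|\nabla_AF_A|^2\,dv=\int_M|D_A^*F_A|^2\,dv+\int_M\left(F_A\#F_A\#F_A+\mathrm{Rm}\#F_A\#F_A\right)dv .
\]
The cubic term is absorbed exactly as in your Step 2, after splitting it over a bounded-overlap cover. One then integrates in time using the dissipation $2\int_0^T\!\int_M|D_A^*F_A|^2\,dv\,dt=\int_M|F_{A_0}|^2-\int_M|F_{A(T)}|^2$ from (\ref{Energy identity}).

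In that route, the term $1$ in $C(1+T/R_0^2)$ comes from the dissipation. The static inequality $\int_M|\nabla_AF_A|^2\le C\int_M|D_A^*F_A|^2+CR_0^{-2}\int_M|F_A|^2$ holds for any connection with small local energy, so the flow enters only through the energy identity.

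In your route, the $1$ comes instead from the initial localized energy $\int|F_{A_0}|^2\phi^2$. Your smallness of $\varepsilon$ is fixed ball by ball, independently of the overlap number. Before summing, you actually obtain the local space-time estimate
\[
\int_0^T\!\!\int_{B_{R_0/2}(x_0)}|\nabla_AF_A|^2\,dv\,dt\le 2\int_{B_{R_0}(x_0)}|F_{A_0}|^2\,dv+\frac{C}{R_0^2}\int_0^T\!\!\int_{B_{R_0}(x_0)}|F_A|^2\,dv\,dt ,
\]
which is a localized refinement of the lemma. It is the same kind of computation the paper carries out one order higher, for $\nabla_AF_A$, in the proof of Lemma \ref{Lemma 4.2}.
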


We have estimates of higher order derivatives (e.g., see, \cite{HT1}, \cite{HTY}, \cite{CCH}):
\begin{lem}  \label{Lemma 6}   Let $A(t)$ be a solution to the Yang-Mills flow (\ref{YMF}) in $M\times
 [0,T]$ with initial connection $A(0)=A_0$.   Assume that there is a sufficiently small constant $\varepsilon>0$ such that
\begin{eqnarray}\label{A-G}
\sup_{ t_0-4R_0^2\leq t\leq t_0}  \int_{B_{2R_0}(x_0)} |F_A(x,t)|^{2}dv \leq \varepsilon
\end{eqnarray}
 for any $x_0\in M$ and $t_0>4R_0^2$. Then   for any integer $l\geq 1$, we have
 \begin{eqnarray} \label{2.28'}
&&\sup_{t_0-R_l^2\leq t\leq t_0}
\int_{B_{R_l}(x_0)}
\sum_{i=1}^{l}
|\nabla_A^iF_A|^{\frac{2l}{i}}\,dv
+
\int^{t_0}_{t_0-R^2_l}\int_{B_{R_l}(x_0)}
|\nabla_A^{l+1}F_A|^2\,dv\,dt\\
&
\leq&
C_l\sum_{i=1}^{l}
R_0^{\,4-2l-\frac{4l}{i}},\nonumber
\end{eqnarray}
   for some constant  $C_l > 0$ independent of  $A$ and $R_0$, where  $R_l:=\frac 12 R_0+\frac {R_0}{2^{l+1}}$ for  each $l\geq 1$.
\end{lem}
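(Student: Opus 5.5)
The statement to be proved is Lemma \ref{Lemma 6}, the local higher-order estimate. I would prove it by induction on $l$, running a localized parabolic energy (Bochner) argument on a shrinking family of parabolic cylinders $P_{R}=B_{R}(x_0)\times[t_0-R^2,t_0]$, with $R_{l}\downarrow R_0/2$.

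The basic tool is the Weitzenböck-type evolution equation for derivatives of the curvature along the Yang–Mills flow. From $\partial_t A=-D_A^*F_A$ one gets $\partial_t F_A=-\Delta_A F_A$ with $\Delta_A=D_AD_A^*+D_A^*D_A$. By Bochner this becomes
\begin{equation*}
(\partial_t+\nabla_A^*\nabla_A)F_A=F_A\#F_A+Rm\#F_A,
\end{equation*}
and more generally
\begin{equation*}
(\partial_t+\nabla_A^*\nabla_A)\nabla_A^iF_A=\sum_{j+k=i}\nabla_A^jF_A\#\nabla_A^kF_A+\text{lower order},
\end{equation*}
where $\#$ denotes bilinear contractions. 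For a cutoff $\phi$ supported in $B_{R_{l-1}}$ with $|\nabla\phi|\le C/(R_{l-1}-R_l)\sim C2^{l}/R_0$, and a time cutoff of the same scale, I test the equation for $\nabla_A^lF_A$ against $\phi^2\nabla_A^lF_A$. This gives
\begin{align*}
&\sup_t\int\phi^2|\nabla_A^lF_A|^2+\int\!\!\int\phi^2|\nabla_A^{l+1}F_A|^2 \\
&\quad\le \frac{C}{R_0^2}\int\!\!\int_{P_{R_{l-1}}}|\nabla_A^lF_A|^2+\int\!\!\int\phi^2\sum_{j+k=l}|\nabla_A^jF_A||\nabla_A^kF_A||\nabla_A^lF_A|.
\end{align*}
The base case $l=1$ (with $l=0$ input) uses the hypothesis \eqref{A-G} together with the local energy inequality \eqref{Loca identity} and Lemma \ref{Lemma 2}. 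Rescaling the estimate of Lemma \ref{Lemma 2} to a cylinder of size $R_0$ gives $\int\!\!\int|\nabla_AF_A|^2\le C\varepsilon$, scale-adjusted.

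The main obstacle is the critical nonlinear term $\int\!\!\int\phi^2|F_A||\nabla_A^lF_A|^2$. In dimension four it is only controlled by smallness. By Kato's inequality and the Sobolev inequality on $B^4$,
\begin{equation*}
\int\phi^2|F||\nabla^l_AF|^2\le \|F\|_{L^2(B)}\,\|\phi\nabla_A^lF\|_{L^4}^2\le C\varepsilon^{1/2}\bigl(\|\nabla(\phi|\nabla_A^lF|)\|_{L^2}^2+R_0^{-2}\|\nabla_A^lF\|_{L^2}^2\bigr).
\end{equation*}
The first piece is absorbed into the left side when $\varepsilon$ is small; this is exactly where \eqref{A-G} enters. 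The intermediate terms with $1\le j,k\le l-1$ are handled by Hölder's inequality using the mixed-norm quantities $\int|\nabla_A^iF_A|^{2l/i}$. These have the same scaling and are interpolated by Gagliardo–Nirenberg/Kato between $\|F\|_{L^2}$-type bounds and $\|\nabla_A^{l}F\|_{L^2}$. For this reason the sup-part of the inductive statement is formulated with the exponents $2l/i$, which makes all terms scale-invariant. In the same step I would show $\sup_t\int|\nabla_A^iF|^{2l/i}$ is bounded, using the equation for $|\nabla^i_AF|^{2l/i}$ (or interpolation from the $L^2$ bounds already proved at levels $\le l$ with the Sobolev embedding $W^{1,2}\subset L^4$).

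Finally, the constants are tracked by scaling. Rescaling $x\mapsto x/R_0$, $t\mapsto t/R_0^2$ makes $R_0=1$. In the rescaled problem the right-hand side is a constant $C_l$, and since the curvature scales like $R_0^{-2}$, undoing the scaling yields the factors $R_0^{4-2l-4l/i}$. The loss of constants from $|\nabla\phi|\sim2^l/R_0$ is absorbed into $C_l$. I therefore expect the delicate point to be the absorption via small energy and the interpolation of the mixed $L^{2l/i}$ norms; the rest is standard bookkeeping.
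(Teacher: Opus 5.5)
The paper gives no proof of this lemma; it only refers to \cite{HT1}, \cite{HTY}, \cite{CCH}. Your outline is the standard argument behind those estimates, and the route is sound: induction on $l$ over shrinking cylinders, localized Bochner energy estimates for $\nabla_A^lF_A$, absorption of the critical term $\int\phi^2|F_A||\nabla_A^lF_A|^2$ via $\|F_A\|_{L^2}\le\varepsilon^{1/2}$ plus Sobolev and Kato, H\"older with exponents $2l/j$, $2l/k$, $2$ for the mixed terms, and rescaling to $R_0=1$. Two places need tightening.

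\textbf{Base case.} You need the \emph{localized} bound $\int_{t_0-R_0^2}^{t_0}\int_{B_{R_0}(x_0)}|\nabla_AF_A|^2\,dv\,dt\le C\varepsilon$. It is obtained by testing $\partial_tF_A=-D_AD_A^*F_A$ against $\phi^2F_A$, then using the Bochner formula with small energy. Lemma \ref{Lemma 2} itself is global and bounded by $\int_M|F_{A_0}|^2$, and rescaling does not localize it. Invoking it as stated would make $C_l$ depend on $A$.

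\textbf{Mixed terms.} Your H\"older step needs the norms $\|\nabla_A^jF_A\|_{L^{2l/j}}$ for $1\le j\le l-1$, and it needs them before the level-$l$ $L^2$ bound is available. The level-$(l-1)$ statement does not supply them, since its exponents are $2(l-1)/j$, so appealing to bounds ``at levels $\le l$'' is circular. Interpolating only up to $\|\nabla_A^lF_A\|_{L^2}$ does not rescue this: against $\|F_A\|_{L^2}$ it produces the power $2+2/l$ of $\|\nabla_A^lF_A\|_{L^2}$, which the inductive $L^2_t$ bound does not control. The clean fix is a weighted Gagliardo--Nirenberg interpolation between $\|F_A\|_{L^2}$ and $\|\phi\nabla_A^{l+1}F_A\|_{L^2}$. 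There the exponents add up so that each mixed term is at most $C\|F_A\|_{L^2}\|\phi\nabla_A^{l+1}F_A\|_{L^2}^2$ plus cutoff terms, which you absorb exactly like the critical term. Alternatively, close the estimate with Young's inequality and an iteration over radii.
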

We  recall two estimates from Uhlenbeck \cite{U2}.
\begin{lem} \label{L4}Let $U$ be an open ball. For each $D_A\in W^{1,p}(U)$ with $2<p<4$, we assume that  $d^*A=0$ in $U$ and $A\cdot \nu =0$ on $\partial U$. Then  we have
\[\int_U |\nabla A|^2dx +\int_{\partial U} |A|^2d\omega = \int_U |d A|^2dx. \]
\end{lem}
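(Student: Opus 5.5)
The statement to prove is Lemma \ref{L4}: an integration-by-parts identity (a Gaffney/Bochner-type formula) for $1$-forms on a ball that are co-closed and tangential on the boundary. The plan is to first prove it for smooth $A$ up to the boundary, then pass to $W^{1,p}$, $2<p<4$, by density.

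\textbf{Step 1: pointwise algebra.} Here $A$ is a matrix-valued $1$-form in flat coordinates and the operators $d$, $d^*$ act componentwise. For $A=\sum_k A_k\,dx_k$ we have $|dA|^2=\sum_{k<l}|\partial_kA_l-\partial_lA_k|^2$. Expanding gives
\[
|dA|^2=|\nabla A|^2-\sum_{k,l}\langle \partial_kA_l,\partial_lA_k\rangle .
\]
For the cross term, integrate by parts twice:
\[
\sum_{k,l}\langle\partial_kA_l,\partial_lA_k\rangle=(d^*A)^2+\sum_{k,l}\Big[\partial_k\langle A_l,\partial_lA_k\rangle-\partial_l\langle A_l,\partial_kA_k\rangle\Big],
\]
using $d^*A=-\sum_k\partial_kA_k$. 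With $d^*A=0$ this leaves the divergence of the vector field
\[
V_k=\sum_l\big(\langle A_l,\partial_lA_k\rangle-\langle A_k,\partial_lA_l\rangle\big).
\]
Since $\partial_lA_l$ summed is $-d^*A=0$, this reduces to $V_k=\sum_l\langle A_l,\partial_l A_k\rangle$.

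\textbf{Step 2: the boundary term.} By the divergence theorem on $U=B_r(x_0)$,
\[
\int_U|\nabla A|^2-\int_U|dA|^2=\int_{\partial U}\sum_{k,l}\nu_k\langle A_l,\partial_lA_k\rangle\,d\omega .
\]
Write $\nu=(x-x_0)/r$. The tangency condition $A\cdot\nu=\sum_k\nu_kA_k=0$ holds on $\partial U$. Differentiate it in the tangential direction $\sum_lA_l\partial_l$; this is legitimate because the "vector" $(A_l)_l$ is tangent to the sphere, reading it componentwise in $\mathfrak g$. This gives
\[
\sum_{k,l}\nu_k\,A_l\partial_lA_k=-\sum_{k,l}A_lA_k\,\partial_l\nu_k=-\frac1r\sum_k A_kA_k .
\]
Taking the trace inner product, the boundary integrand equals $-\frac1r|A|^2$, with the sign fixed by the orientation of $\langle\cdot,\cdot\rangle$. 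This yields
\[
\int_U|\nabla A|^2+\frac1r\int_{\partial U}|A|^2=\int_U|dA|^2 .
\]
For the unit ball, $r=1$, which is the stated identity; the curvature of the sphere is exactly what produces the positive boundary term. Care is needed with the $\mathfrak g$-valued components: the tangential derivative must be applied to the scalar identity $\langle\sum_k\nu_kA_k,\,\cdot\,\rangle=0$ paired appropriately. I would write it as $\sum_k\nu_kA_k^{\alpha}=0$ for each matrix entry $\alpha$ and differentiate along the tangent vector field $X^{\alpha}=(A^{\alpha}_l)_l$. This entrywise bookkeeping is the step I expect to require the most attention.

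\textbf{Step 3: approximation.} For $A\in W^{1,p}$ with $p>2$ in dimension $4$, the trace $A|_{\partial U}$ lies in $L^2(\partial U)$, and both sides are continuous in the $W^{1,2}$ topology via the trace theorem. I would approximate $A$ in $W^{1,p}$ by smooth forms satisfying the same conditions $d^*A=0$ and $A\cdot\nu=0$. One way is to take smooth approximants $\tilde A_m$ and correct them by $d\phi_m$, where $\phi_m$ solves the Neumann problem $\Delta\phi_m=d^*\tilde A_m$ with $\partial_\nu\phi_m=\tilde A_m\cdot\nu$. By elliptic $W^{1,p}$ (Neumann) estimates, the corrected forms converge to $A$, and the identity passes to the limit. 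The main obstacle is really just the sign and index bookkeeping in Step 2; the rest is standard.
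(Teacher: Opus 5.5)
Your proof is correct. It is the standard flat-space integration by parts (the Bochner--Gaffney identity for co-closed tangential $1$-forms), which the paper does not write out but takes from Uhlenbeck~\cite{U2} and Gaffney's inequality; the only imprecision is that the Neumann correction needs $W^{2,p}$ estimates for $\phi_m$ (or just $W^{2,2}$, since the identity is continuous in $W^{1,2}$), and these are standard. Your factor $\frac1r$ on the boundary term is also right, so the identity as literally stated holds only when $U$ is the unit ball, which is the normalization $U=B_1$ that the paper adopts.
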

We note that    Lemma  \ref{L4} is a special case of  Gaffney's inequality (see, e.g., \cite{G1}, \cite{F}).
\begin{lem} \label{L5} Let  $D_A$ be a smooth connection satisfying   $d^*A=0$ in $U$ and $A\cdot \nu =0$ on $\partial U$. Then, for any $2\leq p<4$, we have
\begin{equation}\label{3.F.0}
\|A\|_{W^{1,p}(U)} \leq C \left (\int_U |d A|^p\,dx\right )^{1/p} \end{equation}
foer some constant $C$ independent of $A$. Moreover, if there exists a sufficient small $\varepsilon$ such that
 \begin{equation}\label{3.F.1}
 \int_{U_i}|F_{A }|^2\,dv\leq \varepsilon,
 \end{equation}
 then
  \begin{equation}\label{3.F.2}
  \|A\|_{W^{1,p}(U)} \leq C \left (\int_U |F_A|^p\,dx\right )^{1/p}.
 \end{equation}
\end{lem}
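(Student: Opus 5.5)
The statement to prove is Lemma \ref{L5}. The plan is to derive \eqref{3.F.0} from an $L^p$ elliptic estimate for the Hodge system $(d,d^*)$ with absolute boundary condition, and then get \eqref{3.F.2} by an absorption argument using $F_A=dA+A\wedge A$.

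\textbf{Step 1: proof of \eqref{3.F.0}.} Since $d^*A=0$ in $U$ and $A\cdot\nu=0$ on $\partial U$, the $1$-form $A$ solves the first-order elliptic boundary value problem
\[
(d+d^*)A=dA \quad\text{in } U, \qquad \iota_\nu A=0 \quad\text{on } \partial U.
\]
For this system the Gaffney--Friedrichs $L^p$ theory (\cite{G1}, \cite{F}) gives
\[
\|\nabla A\|_{L^p(U)}\le C\bigl(\|dA\|_{L^p(U)}+\|d^*A\|_{L^p(U)}+\|A\|_{L^p(U)}\bigr).
\]
I will remove the lower-order term $\|A\|_{L^p(U)}$ as follows.
\begin{enumerate}
\item On a ball, the space of harmonic fields with vanishing normal component is trivial, since it is isomorphic to $H^1(U)=0$.
\item A standard compactness/contradiction argument then gives $\|A\|_{L^p(U)}\le C\|dA\|_{L^p(U)}$ for all such $A$. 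Suppose instead that $\|A_k\|_{L^p}=1$ and $\|dA_k\|_{L^p}\to0$. The estimate above bounds $A_k$ in $W^{1,p}$, so Rellich gives a subsequence converging strongly in $L^p$. The limit is a nonzero harmonic field with vanishing normal component, which contradicts item 1.
\end{enumerate}
For $p=2$ this is consistent with Lemma \ref{L4}: on a convex ball the boundary term $\int_{\partial U}|A|^2$ is nonnegative, which already gives $\|\nabla A\|_{L^2}\le\|dA\|_{L^2}$.

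\textbf{Step 2: proof of \eqref{3.F.2}.} Write $dA=F_A-A\wedge A$, so that
\[
\|dA\|_{L^p}\le\|F_A\|_{L^p}+\|A\|_{L^{2p}}^2 .
\]
By Hölder with exponents $4/(4-p)$ and $4/p$,
\[
\|A\wedge A\|_{L^p}\le\|A\|_{L^4}\,\|A\|_{L^{4p/(4-p)}} .
\]
By Sobolev embedding in dimension four, $\|A\|_{L^{4p/(4-p)}}\le C\|A\|_{W^{1,p}}$. Combining with \eqref{3.F.0} gives
\[
\|A\|_{W^{1,p}}\le C\|F_A\|_{L^p}+C\|A\|_{L^4}\,\|A\|_{W^{1,p}} .
\]
To absorb the last term I need $\|A\|_{L^4}$ small. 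Apply \eqref{3.F.0} with $p=2$ and the Sobolev inequality $W^{1,2}\hookrightarrow L^4$:
\[
\|A\|_{L^4}\le C\|A\|_{W^{1,2}}\le C\|F_A\|_{L^2}+C\|A\|_{L^4}^2 .
\]

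\textbf{Main obstacle.} The last inequality alone does not make $\|A\|_{L^4}$ small, since large $\|A\|_{L^4}$ also satisfies it. I will run a continuity argument, which is how Uhlenbeck handles this step.
\begin{enumerate}
\item Work along the family $A_s$, $s\in[0,1]$, obtained from the gauge construction, or along the scalings $A_s(x)=sA(sx)$ on the unit ball.
\item Each $A_s$ is in Coulomb gauge with normal condition, $s\mapsto\|A_s\|_{L^4}$ is continuous, and $A_0=0$. By \eqref{3.F.1} and scale invariance, $\|F_{A_s}\|_{L^2}\le\varepsilon^{1/2}$ for all $s$.
\item The inequality $y\le C\varepsilon^{1/2}+Cy^2$ excludes an interval of values of $y$ when $\varepsilon$ is small. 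By continuity, $\|A_s\|_{L^4}$ stays on the small branch, giving $\|A\|_{L^4}\le C\varepsilon^{1/2}$.
\end{enumerate}
Once $\|A\|_{L^4}\le C\varepsilon^{1/2}$, the term $C\|A\|_{L^4}\|A\|_{W^{1,p}}$ is absorbed into the left side for $\varepsilon$ small, which gives \eqref{3.F.2}. The constants depend only on $p$, and $p<4$ is needed so that the Sobolev embedding used above holds.
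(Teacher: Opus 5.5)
Your Step 1 is correct and is essentially the paper's argument. The paper obtains $\|\nabla A\|_{L^p}\le C(\|dA\|_{L^p}+\|A\|_{L^p})$ by flattening and reflecting across $\partial U$ and using interior $L^p$ estimates for $\Delta A=d^*dA$. It then runs the same contradiction argument, but disposes of the limit with Lemma~\ref{L4} ($dA=0$ forces $\nabla A=0$ and $A|_{\partial U}=0$) rather than with $H^1(U)=0$. For \eqref{3.F.2} the paper gives no proof and only cites \cite{U2}.

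Step 2 has a genuine gap in the continuity argument. The rescaled forms $A_s(x)=sA(sx)$ do satisfy $d^*A_s=0$. But on $\partial B_1$ one has $A_s(x)\cdot x=A(sx)\cdot(sx)$, which is $s$ times the normal component of $A$ on the interior sphere $\partial B_s$, and it need not vanish. So \eqref{3.F.0}, and with it your inequality $y\le C\varepsilon^{1/2}+Cy^2$, is not available for $s<1$. Without the normal condition there is no such estimate at all: a nonzero constant form $X\,dx^1$ with $X\in\mathfrak g$ is Coulomb and flat.

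The family from Uhlenbeck's construction does not rescue this either. That method re-gauges at every $s$ and ends at \emph{some} Coulomb representative $g^*A$ on the small branch, not at the given $A$.

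In fact the conclusion $\|A\|_{L^4}\le C\varepsilon^{1/2}$ is false for an arbitrary smooth $A$ with $d^*A=0$ and $A\cdot\nu=0$. Take a smooth nonconstant harmonic map $g\colon \bar B^4\to SU(2)\cong S^3$ with $\partial_\nu g=0$. Such maps exist: use the ansatz $g=(\cos f(r),\sin f(r)\,\psi(x/|x|))$, where $\psi$ is the Hopf map and $f$ solves $f''+\frac3r f'=\frac4{r^2}\sin 2f$ with $f\sim cr^2$. Restrict to the ball whose radius is the first zero of $f'$; this zero exists because, in the variable $\log r$, $f=\pi/2$ is a stable focus. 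Then $A=g^{-1}dg$ is smooth, satisfies $d^*A=0$, $A\cdot\nu=0$ and $F_A=0$, yet $A\neq0$. This contradicts \eqref{3.F.2}.

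What is missing is an a priori smallness hypothesis on $A$ itself, for example $\|A\|_{L^4(U)}\le\delta$, or equivalently (via \eqref{3.F.0} and Sobolev) smallness of $\|A\|_{W^{1,2}(U)}$. This is the form in which Uhlenbeck's estimate holds. It is also satisfied in the paper's applications, since the connections there come from the gauge-fixing construction and obey \eqref{3.4.1}. Under that hypothesis your H\"older--Sobolev absorption closes immediately, and the continuity argument should be dropped.
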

\begin{proof}\eqref{3.F.0}  was mentioned in \cite{U2} without   giving details. For completeness, we give detailed proofs here.

For simplicity, we assume that $U=B_1$. Note $d^*A=0$ in $U$.
Then
\[\Delta A=d^*d A+dd^*A=d^*dA.\]
By the local interior $L^p$-estimate (See \cite {Mo}), we have
\[ \int_{B_{1/2}} |\nabla A|^p\,dv \leq C\int_{U}|dA|^p+ |A|^{p}\,dv. \]
Note  $A\cdot \nu =0$ on $\partial U$. Without loss of generality, we can assume  for $y_i\in \partial U$
\[\partial U\cap B_{1}(y_i)= \{x_n=0\}\cap B_{1}(y_i). \]
We need to extend
\begin{align*}
	 A(x)=&A_1(x_1, \cdots , x_{n-1} , x_n) dx_1+\cdots +A_{n-1} (x_1, \cdots , x_{n-1} , x_n)dx_{n-1}\\
& +A_n (x^1, \cdots , x^{n-1} , x^n) dx^n  \end{align*}
from $B^+(y_i)=B_{1}(y_i)\cap \{x_n\geq 0\}$
to
  \begin{align*} \bar A(x)= &A_1(x_1, \cdots , x_{n-1} ,- x_n) dx^1+\cdots +A_{n-1}  (x_1, \cdots , x_{n-1} ,- x_n)dx_{n-1}\\ &-A_n (x_1, \cdots , x_{n-1} ,- x_n) dx_n \end{align*}
  on $B^-(y_i)= B_{1/2} (y_i)\cap \{x_n\leq 0\}$.
Then it can be checked that   $\bar A\in W^{1,p}(B_{1}(y_i))$, $d^*\bar A=0$ in $B^-(y_i)$  and
\[\int_{B^-(y_i)}|d\bar A|^p\,dv= \int_{B^+(y_i)}|dA|^p\,dv.\]
Then we approximate $\bar A$ in n $W^{1,p}(B_{1}(y_i))$ by a sequence smooth function $A_k$ with $d^*A_k=0$ in $B_{3/4}$.
Using the local interior $L^p$-estimate again to the equation $\Delta A_k=-d^*dA_k$, we have

\begin{align} \label{Bd}\int_{B_{3/4}(y_i)\cap U} |\nabla A_k|^p\,dv \leq C\int_{B(y_i)\cap U}|dA_k|^p+ |A_k|^{p}\,dv.   \end{align}
Let $\{B_{3/4}(y_i)\}_{i=1}^J$ be an open cover of $\partial U$, which implies
\[\bar U \subset \cap_{i=1}^JB_{3/4}(y_i)\cap B_{1/2}. \]
According to \eqref{Bd},  we have
\[ \int_{U} |\nabla A|^p\,dv \leq C\int_{U}|dA|^p+ |A|^{p}\,dv. \]
If $d^*A=0$ in $U$ and $A\cdot \nu =0$ on $\partial U$,  we claim
\[ \int_{U} |A|^{p} dv \leq C\int_{U}|dA|^p \,dv. \]
In fact, if not, there is a sequence $\{A^k\}_{k=1}^{\infty}$ such that $d^*A^k=0$ in $U$ and $A^k\cdot \nu =0$ on $\partial U$ satisfying
\[\int_{U} |A^k|^{p}\,dv =1, \quad \int_{U} |d A^k|^{p}\,dv\leq \frac 1 k.  \]
As $k\to\infty$, $A^k$ converges weakly to an  one-form $A$ in $W^{1,p}(U)$ satisfying
\[\int_{U} |A|^{p}\,dv =1, \quad \int_{U} |d A |^{p}\,dv=0.\]
Note that $A$   satisfies
$d^*A=0$ in $U$ and $A\cdot \nu =0$ on $\partial U$.  Then, using Lemma 2.4, $A=0$ on $\partial U$, which implies $A=0$ in $U$. This is a contradiction.
Therefore, we have
\begin{eqnarray*}
&&
 \int_{U}|A|^p+|\nabla A|^p\,dv \leq C\int_{U}|dA|^p \,dv.
\end{eqnarray*}
\eqref{3.F.2} was proven in \cite{U2}.

\end{proof}

\section{A parabolic gauge fixing theorem and existence of  weak solutions}

Since $M$ is compact, there is a finite  cover of open balls $\{U_i\}_{i=1}^L$ of $M$ with $U_i=B_{r_0}(x_i)$, such that at each $x\in M$, there are at most a finite number $K$ of  balls intersecting.
In \cite{CCH}, we develop a parabolic formulation of the gauge fixing theorem of the Yang-Mills flow.
Now, we improve the  parabolic   gauge fixing theorem  by

 \begin{lem}\label{Lemma 3.2} Let $D_{A(t)}= d +A(t)$ be a smooth   solution  of the Yang-Mills
flow (\ref{YMF}) in $M\times [0,t_1]$ for some $t_1>0$ with  smooth initial value  $A_0$ satisfying
 \begin{equation}\label{3.1.1}
\sup_{0\leq t\leq t_1} \int_{U_i}|F_{A }(\cdot, t)|^2\,dv\leq \varepsilon.
 \end{equation}
Then,
   for each  ball $U_i$, there exist    a smooth gauge
transformation  $S_i(t)$    and  a  connection
$D_{a_i}=S_i^*(D_{A})=d+a_i$ such that
 \begin{equation}\label{3.2.1}  d^*  a_i(t)=0 \quad \mbox {in } U_i,\quad a_i(t)\cdot \nu=0 \mbox { on }\partial  U_i \end{equation} for each $t\in [0, t_1]$ and
 \begin{equation}\label{3.4.1}
    \sup_{0\leq t\leq t_1}
    \int_{U_i}r_0^{-2} \abs{a_i(t)}^2 +\abs{\nabla  a_i(t)}^2 dv \leq C\varepsilon.
    \end{equation}
  Moreover, $D_{a_i}$  is a smooth solution of the equation
  \begin{equation}\label{3.3.1}
    \pfrac{a_i}{t}=-D_{a_i}^*F_{a_i}+D_{a_i} s_i\quad \mbox{in $ U_i\times (0,t_1]$,}
     \end{equation}
     where \[s_i(t)=S_i^{-1}(t)\circ \frac d{dt} S_i(t)\in W^{1,2}(U_i), \quad \int_{U_i}s_i(x,t)\,dv=0,   \]
satisfying
  \begin{equation}\label{3.5.1}
  \begin{aligned}
  &\int_0^{t_1}\int_{U_i}\left|\frac{\partial a_i}{\partial t}\right|^2+ \abs{\nabla^2a_i }^2 +|\nabla s_i|^2dvdt\\
   \leq &C \int_0^{t_1}\int_{U_i} (|\nabla_{a_i}F_{a_i}|^2+\frac 1{r^2_0} |F_{a_i}|^2 )\,dv\,dt \leq C\left(1+\frac {t_1}{r_0^2}\right)\int_M|F_{A_0}|^2\, dv.
  \end{aligned}
  \end{equation}
\end{lem}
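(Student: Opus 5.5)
The plan is to build the gauge $S_i(t)$ slice by slice with Uhlenbeck's theorem, fixing the constant-gauge ambiguity by a normalization, and then to obtain the equation \eqref{3.3.1} and the estimate \eqref{3.5.1} from the Coulomb condition combined with Lemma \ref{L5} and Lemma \ref{Lemma 2}.

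\emph{Construction of the gauge and \eqref{3.4.1}.} For each fixed $t\in[0,t_1]$, \eqref{3.1.1} together with Uhlenbeck's gauge fixing theorem (rescaled to $U_i=B_{r_0}(x_i)$) gives a Coulomb gauge $S_i(t)$ with \eqref{3.2.1}. Lemma \ref{L5} (case $p=2$, applied after scaling) then yields \eqref{3.4.1}, since $\int_{U_i}|F_{a_i}|^2=\int_{U_i}|F_A|^2\le\varepsilon$. Smooth dependence on $t$ is obtained by the implicit function theorem. Small Coulomb gauges are unique up to right multiplication by a constant element of $G$, so the linearized operator, $s\mapsto d^*D_{a}s$ with Neumann condition $\nu\cdot D_a s=0$ on mean-zero $s$, is invertible when $a$ is small in $W^{1,2}\subset L^4$. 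Starting from $t=0$, the smooth family can therefore be continued for all $t$, using the a priori bound \eqref{3.4.1} to keep $a_i$ in the small regime. The constant ambiguity is fixed by requiring $\int_{U_i}s_i\,dv=0$, which amounts to solving an ODE in $G$ for the constant factor.

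\emph{Derivation of \eqref{3.3.1} and the equation for $s_i$.} Differentiating $a_i=S_i^{-1}dS_i+S_i^{-1}AS_i$ in $t$ and using \eqref{YMF}, together with gauge covariance $S_i^*(D_A^*F_A)=D_{a_i}^*F_{a_i}$, gives \eqref{3.3.1} with $s_i=S_i^{-1}\partial_tS_i$. Applying $d^*$ to \eqref{3.3.1} and using $d^*a_i=0$ gives
\[
d^*D_{a_i}s_i=d^*D^*_{a_i}F_{a_i}\ \text{in }U_i,\qquad \nu\cdot D_{a_i}s_i=\nu\cdot D^*_{a_i}F_{a_i}\ \text{on }\partial U_i,
\]
where the boundary condition comes from $\partial_t(a_i\cdot\nu)=0$.

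\emph{Estimate \eqref{3.5.1}.} Test the elliptic problem above with $s_i$, or write it as $\Delta s_i=d^*(D^*_{a_i}F_{a_i})-d^*[a_i,s_i]$. The commutator term is absorbed using $\|a_i\|_{L^4}\le C\varepsilon^{1/2}$ and Sobolev/Poincar\'e for the mean-zero $s_i$. This gives $\|\nabla s_i\|_{L^2(U_i)}\le C\|D^*_{a_i}F_{a_i}\|_{L^2(U_i)}\le C\|\nabla_{a_i}F_{a_i}\|_{L^2(U_i)}$. Then \eqref{3.3.1} gives $|\partial_ta_i|\le C(|\nabla_{a_i}F_{a_i}|+|\nabla s_i|+|a_i||s_i|)$, so $\|\partial_t a_i\|_{L^2}$ is controlled by the same quantity.

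For $\nabla^2a_i$, apply the $W^{2,2}$ version of the Gaffney/Neumann estimate used in Lemma \ref{L5} (same reflection argument) to $\Delta a_i=d^*da_i$, and write $da_i=F_{a_i}-\tfrac12[a_i,a_i]$. This gives
\[
\|\nabla^2a_i\|_{L^2}\le C\big(\|\nabla F_{a_i}\|_{L^2}+\|a_i\|_{L^4}\|\nabla a_i\|_{L^4}+r_0^{-1}\|F_{a_i}\|_{L^2}\big).
\]
Using $\nabla F=\nabla_aF-[a,F]$ and the small $L^4$ norm of $a_i$, the quadratic terms can be absorbed by Gagliardo–Nirenberg, which yields the middle inequality of \eqref{3.5.1}. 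Summing over $i$, using gauge invariance of $|\nabla_AF_A|$ and $|F_A|$, the bounded overlap $K$, Lemma \ref{Lemma 2}, and the energy identity \eqref{Energy identity} gives the final bound.

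The main obstacle is the boundary estimates: the elliptic problem for $s_i$ has a Neumann condition involving $D^*_{a_i}F_{a_i}$, and the $W^{2,2}$ bound for $a_i$ needs boundary regularity under the Coulomb condition $a\cdot\nu=0$. I would handle both with the reflection across $\partial U_i$ from the proof of Lemma \ref{L5} together with Gaffney's inequality (Lemma \ref{L4}). The delicate point is absorbing the terms $[a_i,\cdot]$ in four dimensions, where only smallness of $\|a_i\|_{L^4}$ is available.
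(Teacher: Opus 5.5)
Your outline follows the paper's proof: a smooth family of Coulomb gauges, the ODE in $G$ enforcing $\int_{U_i}s_i\,dv=0$, the bound on $\nabla s_i$ and $\partial_t a_i$, a boundary $W^{2,2}$ estimate with quadratic terms absorbed by $\|a_i\|_{L^4}\le C\varepsilon^{1/2}$, and Lemma~\ref{Lemma 2} at the end. The paper imports the smooth Coulomb family from \cite{CCH}. Your implicit-function-theorem construction is a valid self-contained substitute, because two Coulomb gauges $a,b$ with small $L^4$ norm differ by a map $g$ with $d^*dg=dg\#(a,b)$ and $\partial_\nu g=0$, and testing against $g-\bar g$ forces $dg=0$. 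Testing your Neumann problem for $s_i$ against $s_i$ is exactly the paper's identity $\int_{U_i}\langle\partial_t a_i,ds_i\rangle\,dv=0$. The boundary datum $\nu\cdot D^*_{a_i}F_{a_i}$ drops out, so no boundary regularity for $s_i$ is needed, and that half of your ``main obstacle'' is not one.

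The step that would fail as you describe it is the $W^{2,2}$ boundary estimate ``by the same reflection argument'' as in Lemma~\ref{L5}.
\begin{itemize}
\item In flattened coordinates the reflection $\bar a=R^*a$, with $R(x',x_n)=(x',-x_n)$, is in $W^{1,2}$ because $a_n=0$ on $\{x_n=0\}$. It is not in $W^{2,2}$.
\item The form $d\bar a=R^*(da)$ reflects the $dx^\alpha\wedge dx^n$ components oddly. Their trace $(da)_{\alpha n}=\partial_\alpha a_n-\partial_n a_\alpha=-\partial_n a_\alpha$ does not vanish, so $d\bar a$ jumps across the boundary.
\item Hence the interior $W^{2,2}$ estimate cannot be applied to $\bar a$. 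Reflection works at second order only under the extra absolute condition $\iota_\nu da=0$, which you do not have.
\end{itemize}

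The inequality you want, $\|\nabla^2a\|_{L^2}\le C(\|\nabla da\|_{L^2}+r_0^{-1}\|\nabla a\|_{L^2}+r_0^{-2}\|a\|_{L^2})$ for $d^*a=0$, $a\cdot\nu=0$, is still true, but it has to come from the first-order $(d,d^*)$ system. The paper proceeds as follows.
\begin{itemize}
\item Tangential derivatives still satisfy $d^*\tilde\nabla a_i=0$ and $\tilde\nabla a_i\cdot\nu=0$ on the flat boundary piece.
\item Gaffney's inequality applied to $\phi\,\tilde\nabla a_i$ then bounds $\nabla\tilde\nabla a_i$ by $\tilde\nabla da_i$.
\item The remaining $\partial_n^2 a_i$ is read off from $\Delta a_i=d^*da_i$.
\end{itemize}
Alternatively, you may cite $H^2$-regularity for the div--curl system with normal boundary condition. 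With that substitution your proof goes through.
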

\begin{proof}
 Using \eqref{3.1.1}, we apply Uhlenbeck's gauge fixing theorem in our previous paper \cite{CCH}   to obtain   that for each $U_i$, there exists smooth gauge
transformations $\tilde S_i(t)\in G$  such that $D_A$ is gauge-equivalent to
$D_{\tilde a_i (t)}=\tilde S_i^*(D_{A(t)})$ in $U_i$, which satisfies
 \begin{equation}\label{US2}  d^* \tilde a_i(t)=0 \quad \mbox {in } U_i,\quad  \tilde a_i(t)\cdot \nu=0 \mbox { on }\partial  U_i \end{equation}
 and
 \begin{equation}\label{US2}  \int_{U_i} \frac {| \tilde a_i(t)|^{2}}{r^{2}_0}+|\nabla  \tilde a_i(t)|^{2}\,dv\leq C   \int_{U_i}|F_{ \tilde a_i(t)}|^{2}\,dv\leq C\varepsilon_0= \varepsilon. \end{equation}
Moreover, it was shown in \cite{CCH} that $D_{a_i}$ is a smooth solution to the equivalent Yang-Mills flow
\begin{eqnarray}\label{good}
    \pfrac{ \tilde a_i}{t}=-D_{\tilde a_i }^*F_{\tilde a_i }+D_{\tilde a_i } \tilde s_i\quad\mbox { in } {U_i}\times [0,t_1]
    \end{eqnarray}
with $\tilde s_i(x,t)=\tilde S_i^{-1}(x, t)\circ \frac d{dt} \tilde S_i(x, t)$.

By the
theory of ordinary differential equations, there exists a unique smooth
solution $S_{2,i}(t)\in G$ to the following initial value problem:
 \begin{equation}\label {3.9} \frac d{dt} S_{2,i}=-\left (\frac 1{|U_i|}\int_{U_i} \tilde s_i\,dv \right )\circ S_{2,i} \quad \mbox{in } [0, t_1]
\end{equation}
with $S_{2,i}(0)=I$. Set $D_{a_i}=S_{2,i}^*(D_{\tilde a_i})$, which satisfies
 \begin{eqnarray}\label{EY}
    \pfrac{a_i}{t}=-D_{a_i}^*F_{a_i}+D_{a_i}s_i\quad\mbox{ in }U_i\times [0,t_1]
    \end{eqnarray}
    with $s_i(x,t)=S_{2,i}^{-1}(t)\circ \frac d{dt} S_{2,i}(t)+S_{2,i}^{-1}(t)\tilde s_i(x,t)S_{2,i}(t)$. It can be seen from \eqref{3.9} that
    \[\int_{U_i} s_i(x,t)\,dv
    =S_{2,i}^{-1}(t)\circ \frac d{dt} S_{2,i}(t)|U_i|
    +S_{2,i}^{-1}(t)\int_{U_i}\tilde s_i(x,t)\,dv\,S_{2,i}(t)=0. \] Furthermore, $D_{a_i}$ satisfies  \eqref{3.2.1} and \eqref{3.4.1}.

Next, we will prove \eqref{3.5.1}.
Using \eqref{EY},  we obtain
   \begin{equation} \label{3.11}  \int_{0}^{t_1 }  \int_U| \pfrac{a_i}{t}- D_{a_i} s_i  |^2\,dv\,dt\leq C \int_{0}^{t_1 }  \int_U|D^*_{a_i}
    F_{a_i}|^2\,dx\,dt.  \end{equation}
Using \eqref{3.2.1}, we have
\begin{align}\label{3.12}
		&  \int_{U_i}  \left <\pfrac{a_i}{t} , \, D_{a_i} s_i\right >\,dv = \int_{U_i}  \left <\pfrac{d^* a_i}{t} , \, s_i\right >\,dv+ \int_{U_i}  \left <\pfrac{a_i}{t} , \, [a_i, s_i]\right >\,dv
		\\
		& \leq \frac 12  \int_{U_i} |\pfrac{a_i}{t}|^2\,dx + C(\int_{U_i}   |a_i|^4 \,dv)^{1/2} (  \int_{U_i}   |s_i|^4\,dv)^{1/2}\nonumber
\\
		& \leq \frac 12  \int_{U_i} |\pfrac{a_i}{t}|^2\,dv + C\varepsilon^{1/2}    \int_{U_i}   |\nabla s_i|^2\,dv.\nonumber
	\end{align}
Combining  \eqref{3.11} with  \eqref{3.12} and choosing $\varepsilon$ sufficiently small, we obtain
   \begin{equation} \label{3.16}
   \begin{aligned}
     \int_{U_i}| \pfrac{a_i}{t}|^2 +|\nabla s_i|^2\,dv\,dt&\leq C \int_{0}^{t_1 }  \int_{U_i}  |D^*_{a_i}
    F_{a_i}|^2\,dv\\
    &\leq C\int^{t_1}_0 \int_{U_i}|\nabla_{a_i}F_{a_i}|^2 dvdt.
   \end{aligned}
   \end{equation}
Using Lemma \ref{L5} and Sobolev's inequality, we have
\begin{equation}\label{3.14}
\begin{aligned}
&r_0^{-1}\|a_i\|_{L^2(U_i)}
+\|\nabla a_i\|_{L^2(U_i)}
+\|a_i\|_{L^4(U_i)}\le C\|F_{a_i}\|_{L^2(U_i)}
\le C\varepsilon^{1/2}.
\end{aligned}
\end{equation}
Recalling Lemma \ref{Lemma 2}, we note
    \begin{equation}\label{l2} \int_0^{t_1} \int_{U_i} |\nabla_{a_i} F_{a_i}|^2\,dv\, dt\leq C\left(1+\frac {t_1}{r_0^2}\right)\int_M|F_{A_0}|^2\, dv.
\end{equation}
Without loss of generality, we can assume  that
for $x_0\in \partial U_i$ and $r_0=1$,
\[\partial U_i\cap B_{1/2}(x_0)= \{x^n=0\}\cap B^+_{1/2}(0). \]
Denote $\tilde x=(x_1, \cdots , x_{n-1})$ and $\tilde \nabla =(\partial_{x_1}, \cdots , \partial_{x_{n-1}})$.

 In $B^+$, $d^* \tilde \nabla a=0$ in $B^+$ and $ \tilde \nabla a\cdot \nu=0$ on $\{x^n=0\}\cap B^+_{1/2}(0)$.
 Let $\phi\in C_0^{\infty}(B_1)$ be a cut-off function such that $\phi =1$ in $B_1$ and $\nabla \phi \leq C$.
 Through  Gaffney's inequality    (see, e.g., \cite{G1}, \cite{F}), we note
\begin{eqnarray}\label{GD0}
 &&\int_{B_1^+} |\nabla (\tilde \nabla a_i \phi )|^2 \,dv \leq C \int_{B_1^+} |d (\tilde \nabla a_i  \phi) |^2 + |d^* (\tilde \nabla a_i  \phi) |^2+|\nabla a|^2  \phi^2  \,dv\\
 &\leq & C   \int_{B_1^+} |d \tilde \nabla a_i )|^2 \phi^2+|\nabla a|^2 (|\nabla \phi |^2+ \phi^2)  \,dv.\nonumber
\end{eqnarray}
 Note
   \[\nabla_{a_i} F_{a_i}=  \nabla da_i+ a_i\#\nabla a_i +a_i\#a_i\#a_i.\]
  Using \eqref{GD0}, we have
   \begin{eqnarray}\label{GD1}
&& \int_{B_1^+} |\tilde \nabla d a_i|^2\phi^2\,dv\leq C \int_{B_1^+} |\nabla (\tilde \nabla a_i \phi )|^2 +|\nabla a|^2 |\nabla \phi|^2\,dv  \\
 &\leq &  2  \int_{B_1^+} |\nabla_{a_i} F_{a_i}|^2\phi^2\,dv  + C \int_{B_1^+}  (|a_i|^2  |\nabla a_i|^2  +|a_i|^6) \phi^2\,dx.\nonumber
 \end{eqnarray}
 Using the fact that $\Delta a_i=-d^*da_i$ in $U_i$,  we have
\begin{equation}\label{xn^2}
\frac{\partial^2 a_i}{\partial x_n^2}=-\sum_{j=1}^{n-1}\frac{\partial^2 a_i}{\partial x_j^2}-D_{a_i}^*F_{a_i} +\nabla a_i\#a_i+a_i\#a_i\#a_i .
\end{equation}
Using \eqref{GD1}-\eqref{xn^2}, it implies
\begin{equation}\label{x1}
 \int_{B_1^+} \left|\frac{\partial^2 a_i}{\partial x_n^2}\right|^2 \phi^2\,dv  \leq C\int_{U_i}\left ( |\nabla \tilde{\nabla}a_i|^2 +\left| \nabla_{a_i} F_{a_i}\right|^2 +|a_i|^2|\nabla a_i|^2+|a_i|^6\right )\phi^2\,dv
\end{equation}
Using H\"older's inequality and Sobolev's inequality, we have
 \begin{eqnarray}\label{x2}
         && \int_{B_1^+} |a_i|^2 |\nabla a_i|^2 \phi^2 \,dv\leq  \left (\int_{B_1^+ }  |a_i|^4  \,dv\right )^{1/2}\left (\int_{B_1^+} |\nabla a|^4 \phi^4 \,dv \right )^{1/2}  \\
          &\leq  & C\varepsilon \int_{B_1^+} |\nabla^2 a_i|^2\phi^2 +|\nabla a_i|^2+|a_i|^2 \,dv   \nonumber
    \end{eqnarray}
and
 \begin{eqnarray}\label{x3}
         && \int_{B_1^+}   |a_i|^6\phi^2 \,dv\leq  \left (\int_{B_1^+ }  |a_i|^4  \,dv\right )^{1/2}\left (\int_{B_1^+} |a|^8\phi^4 \,dv \right )^{1/2}  \\
          &\leq  & C\varepsilon \int_{B_1^+} |a|^2 |\nabla a|^2 \phi^2    \,dv.   \nonumber
    \end{eqnarray}

Using \eqref{GD1}-\eqref{x3}  and choosing $\varepsilon$ sufficiently small, we obtain
\begin{eqnarray}\label{3.22}
         \int_{B_{1/2}^+} \abs{\nabla^2 a_i}^2\,dv \leq C    \int_{B_1^+}  ( |a_i|^2+|\nabla a_i|^2 + |\nabla_{a_i} F_{a_i} |^2\,dv.
\end{eqnarray}

Let $B_{r_0}(x_k)\subset U_i$. By a similar way to prove \eqref{3.22}, we have
\begin{eqnarray}\label{3.23}
         &&\int_{B_{r_0/2}(x_k)} \abs{\nabla^2 a_i}^2\,dv
         \\&\leq& C    \int_{B_{r_0}(x_k) }  ( r_0^{-4}|a_i|^2+r_0^{-2}|\nabla a_i|^2    +|\nabla_{a_i} F_{a_i} |^2)\,dv.\nonumber
\end{eqnarray}

   Let $\bar U_i$   be covered by a finite number of open balls $\{B_{r_0/2}(x_k)\}$.    Combining \eqref{3.23} with \eqref{3.22} yields
\begin{equation*}
\begin{aligned}
\|\nabla^2 a_i\|_{L^2(U_i)}
\le C\Big(
    &\|\nabla_{a_i} F_{a_i}\|_{L^2(U_i)}
    + r_0^{-1}\|\nabla a_i\|_{L^2(U_i)}+ r_0^{-2}\|a_i\|_{L^2(U_i)}
\Big).
\end{aligned}
\end{equation*}
This proves \eqref{3.5.1}.
\end{proof}

 As a consequence of Lemma \ref{Lemma 3.2}, we prove Theorem \ref{Theorem 4.2}.
\begin{proof}
Let $D_{A_0}$  be a connection with   curvature $A_0\in W^{1, 2}(M)$.
Assume that for  a sufficient small $\varepsilon_0>0$, there exits a constant $r_0>0$ such that
\begin{equation}  \label{S-C}
\sup_{x_0\in M}\int_{B_{r_0}(x_0)} |F_{A_0}|^{2}\,dv < \varepsilon_0.
\end{equation}
  Then, there exists a sequence of  smooth connections $\{A^k(0)\}_{k=1}^{\infty}$ such that as $k\to \infty$, $A^k(0)$ converges strongly to $A_0 \in W^{1, 2}(M)$ and $\int_{U_i} |F_{A^k(0)}|^{2}\,dv \leq \varepsilon_0$ for a sufficiently large $k$ (see Corollary 2.2. in \cite{U2}).
  Since $A^k(0)$ is smooth, there exists a unique solution $D_{A^k(t)}=D_{\text{ref}}+A^k(t)$    of the Yang-Mills flow  with initial connection $A^k(0)$  for some maximal existence time $T_k>0$.
 We claim that there exists a uniform constant $t_1>0$ such that $T_k\geq t_1$ for all $k\geq 1$.

 For sufficiently small $\varepsilon >0$,  it follows from using \eqref{S-C}  that for any point $x_0\in M$,
\[\int_{B_{2r_0}(x_0)} |F_{A^k_0}|^{2} \,dx< K\varepsilon_0\leq \frac {\varepsilon} 2.  \]
It follows from Lemma \ref{Lemma 1}  and a covering argument over $M$ that
  \begin{equation*}  \int_{B_{r_0}(x_0)} |F_{A^k(t)}|^{2}  \,dv\leq \int_{B_{2r_0}(x_0)} |F_{A^k_0}|^{2}  \,dv +\frac {Ct}{r_0^2} \sup_{0\leq s\leq t} \max_{x\in M}\int_{B_{r_0}(x)} |F_{A^k(s)}|^{2}  \,dv  \end{equation*}
  which implies
\begin{equation*} \max_{x\in M} \int_{B_{r_0}(x)} |F_{A^k(t)}|^{2}  \,dv\leq \varepsilon \end{equation*}
for all $t\in [0,\frac { r_0^2}{2C}]$. Then, the smooth solution $A^k(t)$ of the Yang-Mills flow can be extended to the uniform time $t_1 =  \frac {r_0^2}{2C}>0$  such that
  \begin{equation}  \label{LEY}\sup_{0\leq t\leq t_1}\int_{B_{r_0}(x_0)}|F_{A^k(t)}|^{2}\,dv\leq \varepsilon  \end{equation}
and
 \begin{equation*} \int_{0}^{t_1 }\int_{M}   |\nabla_{A^k}
    F_{A^k}|^2\,dv\,dt\leq C. \end{equation*}
Applying Lemma \ref{Lemma 3.2},  there exist gauge
transformations $S_i^{k}(t)$ and connections
$D_{a^k_i}=(S_i^k)^*(D_{A^k})=d+a_i^k$ in  $U_i$ such that
\[  d^*  a_i^k=0 \quad \mbox {in } U_i,\quad a_i\cdot \nu=0 \mbox { on }\partial  U_i,\]
satisfying
 \[\int_{U_i} r_0^{-2}|a_i^k(t)|^{2}+|\nabla a_i^k(t)|^{2}\,dv\leq C \int_{U_i} |F_{a_i^k(t)}|^{2}\,dv\leq C\varepsilon\]
for  all $t\in [0, t_1]$,
and for which $D_{a_i^k}$ is a smooth solution of the equation
\begin{eqnarray}\label{4.5}
    \pfrac{a_i^k}{t}=-D_{a_i^k}^*F_{a_i^k}+D_{a_i^k} s_i^k
    \end{eqnarray}
   in $U_i\times [0,t_1]$, where \[s_i^k(t)=(S_i^k)^{-1}(t)\circ \frac d{dt} S_i^k(t), \quad \int_{U_i}s_i(x,t)\,dx=0.   \] and
  \begin{eqnarray}\label{3.26}
  &&\int_{0}^{t_1 }\int_{U_i}   \left (|\pfrac{a_i^k}{t}|^2+|\nabla^2 a^k_i|^2 +|\nabla s_i^k|^2\right )\,dv\,dt\\
  &\leq & \int_{0}^{t_1 }\int_{U_i}  ( |\nabla_{a^k}
    F_{a_i^k}|^2+\frac{1}{r_0^2}|F_{a_i^k}|^2)\,dv\,dt\leq C(1+\frac{t_1}{r_0^2})\int_M |F_{A^k_0}|^2\,dv.\nonumber
 \end{eqnarray}
For any $\delta>0$ and each $l \geq 1$, it follows from using Lemma \ref{Lemma 6} that
\begin{equation*}
   \int_{\delta}^{t_1} \int_{U_i}|\nabla_{a_i^k} F_{a_i^k}|^2+\cdots +|\nabla^l_{a_i^k} F_{a_i^k}|^2\,dv\, dt\leq C(\delta, l)
\end{equation*} for uniform  constants $C(\delta, l)>0$. Since $d^* a_i^k=0$ in $U_i$, we have
\[\Delta a_i^k= -d^*d a_i^k=F_{a_i^k}+da_i^k\#\nabla a_i^k.\]
Then using the standard $L^p$-estimates, we obtain
\begin{equation*}
   \int_{\delta}^{t_1} \int_{\frac 12 U_i}|\nabla^{l+1} a_i|^2+|\nabla^l_{a_i} F_{a_i}|^2\,dv\, dt\leq C(\delta, l)
\end{equation*}  for each $l\geq 1$.
 Then, as $k\to\infty$, $a^{k}_i$ smoothly converges to $a_i$ in $U_i$ for any $t\in [\delta, {t_1}]$ satisfying that $d^*a_i(t)=0$ in $U_i$ with $a_i(t)\cdot\nu =0$ on $\partial U_i$  such that for  $t\in [\delta , {t_1}]$,
 \[\int_{U_i} \frac 1{r_0^{2}}|a_i(t)|^{2}+|\nabla a_i(t)|^{2}\,dv\leq C \int_{U_i} |F_{a_i
 (t)}|^{2}\,dv,\]
 \begin{eqnarray}\label{3.27}
  &&\int_{\delta}^{t_1 }\int_{U_i}   \left (|\pfrac{a_i}{t}|^2+|\nabla^2 a_i|^2 +|\nabla s_i|^2\right )\,dv\,dt\\
  &\leq & \int_{\delta}^{t_1 }\int_{U_i}  ( |\nabla_{a_i}
    F_{a_i}|^2+\frac{1}{r_0^2}|F_{a_i}|^2)\,dv\,dt\leq C(1+\frac{t_1}{r_0^2})\int_M |F_{A_0}|^2\,dv.\nonumber
 \end{eqnarray}
Letting $\delta \to 0$,  we obtain
  \begin{eqnarray}\label{3.27}
  &&\int_{0}^{t_1 }\int_{U_i}   \left (|\pfrac{a_i}{t}|^2+|\nabla^2 a_i|^2 +|\nabla s_i|^2\right )\,dv\,dt\\
   &\leq & C\int_{0}^{t_1 }\int_{U_i}  ( |\nabla_{a_i}
    F_{a_i}|^2+\frac{1}{r_0^2}|F_{a_i}|^2)\,dv\,dt\leq C(1+\frac{t_1}{R_0^2})\int_M |F_{A_0}|^2\,dv.\nonumber
       \end{eqnarray}
       Moreover, $a_i$ is a  solution to
  \begin{eqnarray}\label{4.8}
 &&\frac d{dt} a_i= - D_{a_i}^*F_{a_i}+ D_{a_i} s_i\quad \mbox{in $U_i\times (0,t_1]$,}
    \end{eqnarray}
 where \[ \int_{U_i}s_i(x,t)\,dx=0.   \]
Using Lemma \ref{Lemma 1}, we have
\begin{equation}\int_0^{t_1}\int_M \abs{\pfrac{A^k}{t}}^2 dv\,dt\leq  C\left(1+ \frac{t_1}{r_0^2}\right)\int_M  |F_{A^k_0}|^2
  \,dv \end{equation}
and
 \begin{eqnarray}\label{3.28}
\frac {d}{d t} \int_M  |A^k(t) |^2 dv&=&2   \int_M   \left <\frac{d }{d t}A^k(t), A^k(t)\right >
  \,dv\\
  &\leq &  \int_M \abs{\pfrac{A^k}{t}}^2 dv + \int_M  |A^k(t) |^2 dv. \nonumber \end{eqnarray}
Using Gronwall's inequality and \eqref{3.28}, we have
\begin{equation*}
\int_M |A^k(t)|^2\,dv
\leq e^{t_1}\left[
\int_M |A^k(0)|^2\,dv
+C\left(1+\frac{t_1}{r_0^2}\right)
\int_M |F_{A^k_0}|^2\,dv
\right].
\end{equation*}
   In $U_i$, we note
\begin{equation*}
 a^k_{i}(t)= (S_i^k)^{-1}(t)  d S^k_{i}(t) + (S_i^k)^{-1}(t) A^k(t)S^k_{i}(t) .\end{equation*}
Up to a subsequence,  $S^k_{i}$ weakly converges to $S_i$ in $L^{\infty}([0, t_1]; H^{1,2}(U_i))$ and $W^{1,2}([0, t_1]; L^2(U_i))$ as $k\to \infty$.
Furthermore, $A^k$ weakly converges to $A$ in $L^{\infty}(0, t_1; L^{2}(U_i))$ and $W^{1,2}([0, t_1]; L^2(U_i))$ such that $A$ is a weak solution of the Yang-Mills flow with initial value $A_0\in W^{1,2}(M)$.
This shows that $A(t)$ is a weak solution of  the Yang-Mills flow (\ref{YMF}) satisfying

\[A\in C^0\left ( [0, t_1]; L^2( \Omega^1(Ad \,E ))\right )\cap W^{1,2}\left ([0, t_1]; L^2(\Omega^1(Ad \,E ))\right )\] and
		 \[ F_A\in C^0\left ( [0, t_1]; L^2(\Omega^2(Ad \,E ))\right ). \]Fix $\tilde t>0$. In each $U_i$, $\tilde A(t)=S_i^*(\tilde t)(A(x,t))$ is   a weak solution of the with initial data $S_i^*(\tilde t)(A_0)$  in each in $U_i$ and smooth for $t>0$.
\end{proof}

\section{Uniqueness of  weak solutions in dimension four}

In this section, we  prove  uniqueness of weak solutions of the Yang-Mills flow.

For the compact manifold $M$,   there are a finite number of points $y_i\in M$, $i=1,\cdots,L$,  such that $\{U_i=B_{r_0}(y_i)\}_{i=1}^J$ covers $M$.  Moreover, for each $x\in M$ there are at most finite number $K$  of nonempty intersections of  different  balls $U_j$.

\begin{lem}  \label{Lemma 4.1} Let $A(t)$ be a weak solution to the Yang-Mills flow (\ref{YMF}) in Theorem \ref{Theorem 1.2} obtained by Struwe in \cite{St2}.
 Then,
   for each  ball $U_i$, there exist    a   gauge
transformation  $S_i(t)\in W^{1,2}(U_i)$    and  a  connection
$D_{a_i}=S_i^*(D_{A})=d+a_i$ such that
 \begin{equation}\label{UG}  d^*  a_i(t)=0 \quad \mbox {in } U_i,\quad a_i(t)\cdot \nu=0 \mbox { on }\partial  U_i \end{equation} for each $t\in [0, t_1]$ and $D_{a_i}$  is a weak solution of the equation
  \begin{equation}\label{6.2}
    \pfrac{a_i}{t}=-D_{a_i}^*F_{a_i}+D_{a_i} s_i\quad \mbox{in $ U_i\times (0,t_1]$,}
     \end{equation}
     where \[s_i(t)=S_i^{-1}(t)\circ \frac d{dt} S_i(t)\in W^{1,2}(U_i), \quad \int_{U_i}s_i(x,t)\,dv=0.  \]
  Moreover, we have
  \begin{equation}\label{6.4}
    \sup_{0\leq t\leq t_1}
    \int_{U_i}r_0^{-2} \abs{a_i(t)}^2 +\abs{\nabla  a_i(t)}^2 dv \leq C\varepsilon.
    \end{equation}
  and
  \begin{equation}\label{6.5.1}
  \begin{aligned}
 \int_0^{t_1}\int_{U_i}\left|\frac{\partial a_i}{\partial t}\right|^2+ \abs{\nabla^2a_i }^2 +|\nabla s_i|^2dvdt   \leq C\left(1+\frac {t_1}{r_0^2}\right)\int_M|F_{A_0}|^2\, dv.
  \end{aligned}
  \end{equation}
\end{lem}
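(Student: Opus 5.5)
The plan is to show that Struwe's weak solution $A(t)$ coincides, on $[0,t_1]$, with the solution constructed in the proof of Theorem \ref{Theorem 4.2}. The gauged pieces $a_i$ produced there then inherit \eqref{UG}--\eqref{6.5.1} directly.

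First, I would recall how Struwe obtains his solution in \cite{St2}. He approximates $A_0$ strongly in $W^{1,2}(M)$ by smooth connections $A^k(0)$ and runs the smooth Yang-Mills flow $A^k(t)$. He then uses Lemma \ref{Lemma 1} (energy identity and local energy inequality) to get a uniform time $t_1$ on which \eqref{LEY} holds. Finally he passes to the limit, using that for each $t>0$ the limit $A(t)$ is gauge equivalent to a smooth solution. This is exactly the approximation scheme in the proof of Theorem \ref{Theorem 4.2}. So the same subsequence $A^k$ satisfies the hypotheses \eqref{3.1.1} of Lemma \ref{Lemma 3.2} on each $U_i$ with $t_1=r_0^2/(2C)$.

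Second, I would apply Lemma \ref{Lemma 3.2} to each $A^k$. This gives smooth gauges $S_i^k(t)$ and connections $a_i^k$ satisfying the Coulomb conditions \eqref{3.2.1}, the uniform bound \eqref{3.4.1}, equation \eqref{3.3.1} with $\int_{U_i}s_i^k=0$, and the bound \eqref{3.5.1}. All these bounds are uniform in $k$, since the right-hand side is controlled by $\int_M|F_{A^k_0}|^2\to\int_M|F_{A_0}|^2$. On $[\delta,t_1]$, Lemma \ref{Lemma 6} together with elliptic estimates for $\Delta a_i^k=-d^*da_i^k$ gives smooth convergence $a_i^k\to a_i$ and $s_i^k\to s_i$ for every $\delta>0$. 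On $[0,t_1]$ we have the weak limits $a_i^k\rightharpoonup a_i$ in $L^\infty(0,t_1;W^{1,2})\cap W^{1,2}(0,t_1;L^2)\cap L^2(0,t_1;W^{2,2})$, and $s_i^k\rightharpoonup s_i$ in $L^2(0,t_1;W^{1,2})$, using Poincaré via the mean-zero condition. By lower semicontinuity, \eqref{6.4} and \eqref{6.5.1} pass to the limit. The conditions $d^*a_i=0$ and $a_i\cdot\nu=0$ are linear and closed under weak $W^{1,2}$ convergence, via the trace, so \eqref{UG} holds for every $t$; continuity in time, from the $W^{1,2}(0,t_1;L^2)$ bound, gives it also at $t=0$. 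Passing to the limit in the weak formulation of \eqref{3.3.1} gives \eqref{6.2}. The nonlinear terms $a\#\nabla a$, $a\#a\#a$ and $[a,s]$ converge, because Aubin–Lions gives strong convergence of $a_i^k$ in $L^2(0,t_1;W^{1,2})$ and hence in $L^2L^4$.

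Third, I would pass to the limit in the gauges. Since $G$ is compact, $|S_i^k|\le C$. We have $dS_i^k=S_i^ka_i^k-A^kS_i^k$ and $\partial_tS_i^k=S_i^ks_i^k$, so $S_i^k$ is bounded in $L^\infty W^{1,2}\cap W^{1,2}L^2$. A subsequence therefore converges strongly in $L^2$ and a.e. to some $S_i\in G$, and the relations $a_i=S_i^{-1}dS_i+S_i^{-1}AS_i$ and $s_i=S_i^{-1}\partial_tS_i$ persist in the limit. This shows $D_{a_i}=S_i^*(D_A)$ with $A$ Struwe's solution.

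The main obstacle is the identification step: one must make sure that the limit $A$ here is the weak solution of Struwe, not merely some weak solution. I would handle it by noting that Struwe's solution is by definition the limit of this smooth approximation. His solution is smooth up to gauge for $t>0$ and is uniquely determined there by the smooth flows, whose limit is independent of subsequence on $[\delta,t_1]$ up to gauge. The other delicate point is keeping the normalization $\int_{U_i}s_i=0$ in the limit, which holds because it is linear and preserved under weak convergence.
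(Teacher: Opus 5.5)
There is a genuine gap in your identification step, which is exactly the point this lemma exists to handle. You assert that Struwe's solution is by definition the limit of the smooth approximations $A^k$ from the proof of Theorem~\ref{Theorem 4.2}, and that this limit is independent of the subsequence on $[\delta,t_1]$ up to gauge. Neither claim is available.

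First, in the form the paper uses it, Struwe's solution is not produced by that scheme. It is $A=S_0^*(\hat A)$, where $\hat A$ is a smooth solution of (\ref{YMF}) on $(0,T)$ that attains $A_0$ only up to gauge along a sequence $t_k\to0$. The paper stresses that its construction differs from Struwe's and presents Theorem~\ref{Theorem 4.2} as a \emph{new} existence result. If the two solutions coincided by definition, Lemma~\ref{Lemma 4.1} would be empty.

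Second, and more seriously, saying the weak limit does not depend on the approximating sequence or subsequence is a uniqueness statement for weak solutions with $W^{1,2}$ data. That is precisely what Theorem~\ref{Theorem 1.2} is meant to prove, and its proof invokes this lemma, so your argument is circular. Uniqueness of smooth flows on $[\delta,t_1]$ does not rescue it: you do not know that the two candidate solutions agree, even up to gauge, at time $\delta$.

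The fix, which is the paper's route, is to avoid the identification entirely and work with Struwe's smooth representative $\hat A$.
\begin{itemize}
\item For $t_k\to0$, the restriction $\hat A|_{[t_k,t_1]}$ is a smooth Yang--Mills flow with smooth initial value $\hat A(t_k)$.
\item By the energy identity, $\int_M|F_{\hat A(t_k)}|^2\,dv\le\int_M|F_{A_0}|^2\,dv$.
\item The small-energy hypothesis \eqref{3.1.1} on $[t_k,t_1]$, with $t_1$ independent of $k$, follows from the local energy inequality for Struwe's solution.
\item Lemma~\ref{Lemma 3.2} on $[t_k,t_1]$ then gives Coulomb-gauged $\tilde a_{i;k}$ and mean-zero $\tilde s_{i;k}$ with $k$-uniform bounds.
\item Let $k\to\infty$ on $[\delta,t_1]$, then let $\delta\to0$.
\end{itemize}
Your Steps 2 and 3 carry over essentially verbatim to this sequence. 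These are the Aubin--Lions compactness, lower semicontinuity of the bounds, closedness of the Coulomb and mean-zero conditions under weak limits, and convergence of the gauges, and they are more detailed than what the paper writes. The resulting $a_i$ is then a gauge of Struwe's $A$ by construction, since the gauge $S_0$ relating $A$ and $\hat A$ is time-independent. It no longer rests on an unproven identification.
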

\begin{proof}Let $A$ be a weak solution to Theorem  \ref{Theorem 1.2} obtained by Struwe in \cite{St2} such that for any $t>0$, $A(t)$ is gauge to a  weak solution  $\hat A (t)$, which is smooth for $0<t<T$. Moreover, there exist a sequence $t_k \to 0$ and  a sequence of smooth gauge transformations
and  such that as $k\to \infty$,   $S_k\to S_0$ in $W^{1,2}(M)$, $S^*_k(D_{\hat A(t_k)})\to S^*_0(D_{A_0})$ in $W^{1,2}(M)$ and $A(t)= S^*_0(\hat A(t))$ is a weak solution.

By Lemma , there exists a constant  $t_1>0$ such that
\[\sup_{0\leq t\leq t_1} \max_{x_0\in M} \int_{B_{r_0}(x_0)} |F_{\hat A}(\cdot ,t)|^{2}dv \leq \varepsilon. \]
Applying  Uhlenbeck's gauge fixing theorem,  there exist gauge
transformations $\tilde S_{i; k} (t)$ and connections
$D_{\tilde a_{i; k} }=(\tilde S_{i; k} )^*(D_{A})=d+\tilde a_{i; k} $ in  $U_i$ such that for all $t\in [t_k, t_1]$,
\[  d^* \tilde a_{i; k}  (t) =0 \quad \mbox {in } U_i,\quad \tilde a_{i; k}  (t)\cdot \nu=0 \mbox { on }\partial  U_i \]
satisfying
 \[\int_{U_i} r_0^{-2}|\tilde a_{i; k} (t)|^{2}+|\nabla \tilde a_{i; k} (t)|^{2}\,dv\leq C \int_{U_i} |F_{\tilde a_{i; k}  (t)}|^{2}\,dv\leq C\varepsilon\]
for  all $t\in [t_k, t_1]$  and
 \begin{equation*}
  \begin{aligned}
 \int_{t_k}^{t_1}\int_{U_i}\left|\frac{\partial  \tilde a_{i; k} }{\partial t}\right|^2+ \abs{\nabla^2 \tilde a_{i; k} }^2 +|\nabla \tilde s_i|^2dvdt   \leq C\left(1+\frac {t_1}{r_0^2}\right)\int_M|F_{A_0}|^2\, dv.
  \end{aligned}
  \end{equation*}and $D_{a_{i; k} }$  is a smooth solution of the equation
  \begin{equation}\label{6.2.1}
    \pfrac{\tilde a_{i; k}}{t}=-D_{\tilde a_{i: k}}^*F_{\tilde a_{i; k} }+D_{\tilde a_{i; k} } \tilde s_{i; k}\quad \mbox{in $ U_i\times [t_k,t_1]$,}
     \end{equation}
     where $\tilde s_{i; k} (t)=\tilde S_{i; k} ^{-1}(t)\circ \frac d{dt} \tilde S_{i; k} (t)$ and  $\int_{U_i}s_{i;k}(x,t)\,dv=0$ for all $t\in [t_k, t_1]$.

     Let $\delta >0$ be a small constant.  Since $\tilde a_{i; k} (t)$ for each $t\in [\delta, t_1]$ is smooth, it follows from the same argument in Theorem \ref{Theorem 4.2} that as $k\to \infty$, $\tilde a_{i; k}(t)$  converges to $a_i(t)$ in $U_i\times [\delta, t_1]$, in which $a_i(t)$ is a solution to
      \begin{equation}\label{6.3.1}
    \pfrac{ a_i}{t}=-D_{a_i}^*F_{\tilde a_i}+D_{  a_i}   s_i\quad \mbox{in $U_i\times [\delta ,t_1]$,}
     \end{equation}
     where $ s_i(t)= S_i^{-1}(t)\circ \frac d{dt}  S_i(t)$ in $U_i$ and $\int_{U_i}s_{i}(x,t)\,dv=0$. Moreover, letting $\delta \to 0$, $D_{a_{i}}$ satisfies \eqref{6.2}-\eqref{6.5.1} for all $t\in (0. t_1]$.

\end{proof}

We note that for this type of weak solution, the estimate in Lemma \ref{Lemma 2} holds as well.
We prove a time weighted estimate for weak solutions of the Yang-Mills flow.
\begin{lem}  \label{Lemma 4.2} Let $A(t)$ be a weak solution to the Yang-Mills flow (\ref{YMF})  in Theorem \ref{Theorem 4.2} or Lemma \ref{Lemma 4.1}. Assume that there are constants  $\varepsilon>0$ and $0 < r_0$ such that
 \[\sup_{0\leq t\leq T} \max_{x_0\in M} \int_{B_{r_0}(x_0)} |F_A(\cdot ,t)|^{2}dv \leq \varepsilon. \]
 Then there exists a constant $C > 0$ such that
\begin{equation}\label{4.1}
\begin{aligned}
&\sup_{0<t\leq T}
t\int_M|\nabla_A F_A(t)|^2\,dv
+\int_0^Tt\int_M|\nabla_A^2F_A(t)|^2\,dv\,dt\\
&\qquad\leq
C\left[1+(1+r_0^{-2})T\right]^2
\int_M|F_{A_0}|^2\,dv.
\end{aligned}
\end{equation}
\end{lem}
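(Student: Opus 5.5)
We prove \eqref{4.1} first for the smooth approximating flows $A^k(t)$ built in the proof of Theorem \ref{Theorem 4.2} (and, for Lemma \ref{Lemma 4.1}, for the smooth flows $\hat A$ on $[t_k,T]$). These flows satisfy the small-energy hypothesis uniformly in $k$. We then pass to the limit using the smooth convergence on $[\delta,T]$ and let $\delta\to0$. Every quantity in \eqref{4.1} is gauge invariant, so it suffices to work with smooth solutions and to keep the constants independent of $k$.

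\emph{Step 1: the Bochner identity.} Along the flow $F_A$ satisfies $\partial_t F_A=-\Delta_A F_A$, where $\Delta_A=D_AD_A^*+D_A^*D_A$. Differentiating once more gives
\[
(\partial_t+\nabla_A^*\nabla_A)\nabla_AF_A=F_A\#\nabla_AF_A+Rm\#\nabla_AF_A+\nabla Rm\#F_A .
\]
Testing with $\nabla_AF_A$ yields
\[
\frac{d}{dt}\int_M|\nabla_AF_A|^2\,dv+2\int_M|\nabla_A^2F_A|^2\,dv\le C\int_M|F_A||\nabla_AF_A|^2\,dv+C\int_M\bigl(|\nabla_AF_A|^2+|F_A|^2\bigr)\,dv .
\]

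\emph{Step 2: absorbing the cubic term.} This is the main obstacle. In dimension four, $\int|F_A||\nabla_AF_A|^2$ is critical, and we control it with the small energy $\varepsilon$ on balls of radius $r_0$. Cover $M$ by the balls $B_{r_0}(x)$, which have bounded overlap $K$, and take cutoffs $\phi$ on each. By H\"older's inequality,
\[
\int_{B_{r_0}}|F_A||\nabla_AF_A|^2\phi^2\,dv\le \|F_A\|_{L^2(B_{r_0})}\,\|\phi\nabla_AF_A\|_{L^4}^2 .
\]
Kato's inequality together with the Sobolev inequality $W^{1,2}\subset L^4$ gives
\[
\|\phi\nabla_AF_A\|_{L^4}^2\le C\int\bigl(|\nabla_A^2F_A|^2\phi^2+|\nabla\phi|^2|\nabla_AF_A|^2\bigr)\,dv .
\]
Summing over the cover, the cubic term is bounded by
\[
C\varepsilon^{1/2}\int_M|\nabla_A^2F_A|^2\,dv+C\varepsilon^{1/2}r_0^{-2}\int_M|\nabla_AF_A|^2\,dv .
\]
For $\varepsilon$ small the first piece is absorbed into the left side, and we obtain
\[
\frac{d}{dt}\int_M|\nabla_AF_A|^2\,dv+\int_M|\nabla_A^2F_A|^2\,dv\le C(1+r_0^{-2})\int_M|\nabla_AF_A|^2\,dv+C\int_M|F_A|^2\,dv .
\]

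\emph{Step 3: time weight and integration.} Multiply the last inequality by $t$ and use $\frac{d}{dt}\bigl(t\,y(t)\bigr)=y(t)+t\,y'(t)$. This gives
\[
\frac{d}{dt}\Bigl(t\int_M|\nabla_AF_A|^2\,dv\Bigr)+t\int_M|\nabla_A^2F_A|^2\,dv\le \bigl(1+C(1+r_0^{-2})T\bigr)\int_M|\nabla_AF_A|^2\,dv+CT\int_M|F_A|^2\,dv .
\]
Integrate from $0$ to $t$; the boundary term at $0$ vanishes since $t\int|\nabla_AF_A|^2$ is integrable. The right side is then controlled by two estimates. Lemma \ref{Lemma 2} gives
\[
\int_0^T\int_M|\nabla_AF_A|^2\,dv\,dt\le C(1+T/r_0^2)\int_M|F_{A_0}|^2\,dv ,
\]
and the energy identity \eqref{Energy identity} gives
\[
\int_0^T\int_M|F_A|^2\,dv\,dt\le T\int_M|F_{A_0}|^2\,dv .
\]
The product of the two factors is bounded by $C[1+(1+r_0^{-2})T]^2\int_M|F_{A_0}|^2\,dv$, which is \eqref{4.1}. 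All constants depend only on $M$, $\varepsilon$ and $K$, not on $k$.

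\emph{Step 4: passage to weak solutions.} Apply the uniform bound on $[\delta,T]$ to the smooth approximations. Smooth convergence on compact subsets of $(0,T]$, together with lower semicontinuity, transfers the bound to $A$. Letting $\delta\to0$ completes the proof.
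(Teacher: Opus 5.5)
Your proposal is correct and follows essentially the paper's proof: the Bochner formula for $\nabla_AF_A$, absorbing $\int_M|F_A||\nabla_AF_A|^2\,dv$ via the small energy on $r_0$-balls together with Kato and Sobolev, then multiplying by $t$ and closing with Lemma~\ref{Lemma 2} and the energy identity. The paper applies the computation directly to the smooth flow gauge-equivalent to $A$ on $[t_m,T]$ rather than to approximants, which is only a cosmetic difference.

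One point needs correcting: your reason for the vanishing boundary term. Integrability of $t\int_M|\nabla_AF_A|^2\,dv$ does not give it. What does is $\int_M|\nabla_AF_A|^2\,dv\in L^1(0,T)$, which forces $\liminf_{t\to0}t\int_M|\nabla_AF_A|^2\,dv=0$. So for the flows $\hat A$ of Lemma~\ref{Lemma 4.1} you must integrate from a sequence $t_m\to0$ along which this term tends to zero, as the paper does. For your approximants with smooth initial data the term is simply zero at $t=0$.
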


\begin{proof}
Let $A(t)$ be a weak solution of the Yang-Mills flow in $M\times[0,T]$, which is a weak limit of smooth solutions. From Lemma \ref{Lemma 1}, we have $\int_M |\nabla_A F_A|^2 dv \in L^1 (0,T)$. We claim that there exists a sequence $t_m\rightarrow 0$ such that
\begin{equation}
t_m \int_M |\nabla_A F_A|^2(t_m)dv\longrightarrow0.
\end{equation}
Otherwise, there exist constants $\eta>0$ and $t^*$ such that
\begin{equation}
    t\int_M |\nabla_A F_A|^2(t)dv\geq \eta,
\end{equation}
for all $t\in (0,t^*)$. This implies that
\begin{equation}
 \int^{t_*}_0 \int_M |\nabla_A F_A|^2(t)dvdt\geq \eta  \int^{t^*}_0 \frac{1}{t}dt=\infty.
\end{equation}
This is a contradiction. By Theorem \ref{Theorem 4.2}, $A(t)$ is gauge equivalent to a smooth solution of the Yang-Mills flow $\tilde{A}(t)$ in $M\times [t_m,T]$.

Using the Bianchi identity  $D_{\tilde{A}} F_{\tilde{A}}=0$ and the Weitzenb\"ock formula, we have
\begin{equation}\label{4.2}
\frac{\partial F_{\tilde{A}}}{\partial t}
=-D_{\tilde{A}}D_{\tilde{A}}^*F_{\tilde{A}}
=-\nabla_{\tilde{A}}^*\nabla_{\tilde{A}}F_{\tilde{A}}
+F_{\tilde{A}}\#F_{\tilde{A}}
+R_M\#F_{\tilde{A}}.
\end{equation}
Using the well-known Ricci identity, we obtain
\begin{equation}\label{RIk}
\frac{\partial}{\partial t}\nabla_{\tilde{A}}F_{\tilde{A}}
=-\nabla_{\tilde{A}}^*\nabla_{\tilde{A}}^2F_{\tilde{A}}
+(Rm+F_{\tilde{A}})\#\nabla_{\tilde{A}}F_{\tilde{A}}
+\nabla Rm\#F_{\tilde{A}}.
\end{equation}
 Multiplying \eqref{RIk} by $\nabla_{\tilde{A}}F_{\tilde{A}}$
and integrating over $M$ yields
\begin{equation}
\begin{aligned}
&\quad \frac12\frac{d}{dt}
\int_M |\nabla_{\tilde{A}}F_{\tilde{A}}|^2\,dv
+\int_M |\nabla_{\tilde{A}}^2F_{\tilde{A}}|^2\,dv\\
&\leq
C\int_M (1+|F_{\tilde{A}}|)
|\nabla_{\tilde{A}}F_{\tilde{A}}|^2\,dv
+C\int_M |F_{A_0}|^2\,dv\\
&\leq
C\sum_{i=1}^L
\left(\int_{U_i}|F_{\tilde{A}}|^2\,dv\right)^{1/2}
\left(\int_{U_i}
|\nabla_{\tilde{A}}F_{\tilde{A}}|^4\,dv\right)^{1/2}\\
&
+C\int_M|\nabla_{\tilde{A}}F_{\tilde{A}}|^2\,dv
+C\int_M|F_{A_0}|^2\,dv\\
&\quad \leq
C\varepsilon^{1/2}
\int_M
\left(
|\nabla_{\tilde{A}}^2F_{\tilde{A}}|^2
+\frac{1}{r_0^2}
|\nabla_{\tilde{A}}F_{\tilde{A}}|^2
\right)\,dv
+C\int_M|\nabla_{\tilde{A}}F_{\tilde{A}}|^2\,dv\\
&\quad
+C\int_M|F_{A_0}|^2\,dv.
\end{aligned}
\end{equation}
Choosing $\varepsilon$ to be sufficiently small, we obtain
\begin{equation}\label{4.8'}
\begin{aligned}
&\frac{d}{dt}
\int_M|\nabla_{\tilde{A}}F_{\tilde{A}}|^2\,dv
+\int_M|\nabla_{\tilde{A}}^2F_{\tilde{A}}|^2\,dv\\
&\leq
C\left(1+\frac{1}{r_0^2}\right)
\int_M|\nabla_{\tilde{A}}F_{\tilde{A}}|^2\,dv
+C\int_M|F_{A_0}|^2\,dv.
\end{aligned}
\end{equation}
Multiplying \eqref{4.8'} by time, we get
\begin{equation}
\begin{aligned}
&\quad \frac{d}{d\tau}
\left(\tau\int_M
|\nabla_{\tilde{A}}F_{\tilde{A}}|^2\,dv\right)
+\tau\int_M
|\nabla_{\tilde{A}}^2F_{\tilde{A}}|^2\,dv\\
&\leq
\left(1+C\tau(1+r_0^{-2})\right)
\int_M|\nabla_{\tilde{A}}F_{\tilde{A}}|^2\,dv
+c\tau\int_M|F_{A_0}|^2\,dv.
\end{aligned}
\end{equation}
Integrating over $[t_m,t]$ yields
\begin{equation}
\begin{aligned}
&t\int_M
\left|\nabla_{\tilde{A}(t)}F_{\tilde{A}(t)}\right|^2\,dv
+\int_{t_m}^t\tau\int_M
\left|\nabla_{\tilde{A}(\tau)}^2
F_{\tilde{A}(\tau)}\right|^2\,dv\,d\tau\\
&\leq\quad
\left[1+C(1+r_0^{-2})t\right]
\int_{t_m}^t\int_M
\left|\nabla_{\tilde{A}(\tau)}
F_{\tilde{A}(\tau)}\right|^2\,dv\,d\tau\\
&\qquad +
t_m\int_M
\left|\nabla_{\tilde{A}(t_m)}
F_{\tilde{A}(t_m)}\right|^2\,dv++
\frac{C}{2}(t^2-t_m^2)
\int_M|F_{A_0}|^2\,dv.
\end{aligned}
\end{equation}
Using gauge invariance and letting $t_m \rightarrow 0$, we obtain
\begin{equation}
\begin{aligned}
&t\int_M
\left|\nabla_{A(t)}F_{A(t)}\right|^2dv
+\int_0^t\tau\int_M
\left|\nabla_{A(\tau)}^2F_{A(\tau)}\right|^2dvd\tau\\
&\qquad\leq
\left[1+C(1+r_0^{-2})t\right]
\int_0^t\int_M
\left|\nabla_{A(\tau)}F_{A(\tau)}\right|^2,dvd\tau
+Ct^2\int_M|F_{A_0}|^2dv.
\end{aligned}
\end{equation}
Combining this with Lemma \ref{Lemma 2}, we proves the claim.
\end{proof}

Now we give a proof of Theorem  \ref{Theorem 1.2}.
\begin{proof}

For $k=1,2$, let $D_{A^k}$     be two weak solutions of the Yang-Mills flow obtained in Theorem \ref{Theorem 4.2},  with the same initial value $A_0 \in W^{1, 2}(M)$ satisfying
such that
 \[\sup_{0\leq t\leq t_1} \max_{x_0\in M} \int_{B_{r_0}(x_0)} |F_{A^k}(\cdot ,t)|^{2}dv \leq \varepsilon. \]
By the  existence in  Theorem  \ref{Theorem 4.2} and Lemma \ref{Lemma 4.1}, we have the following properties:

For $k=1,2$ and each ball $U_i$ with $1\leq i\leq J$, there are two gauge
transformations $S_i^{k}(t)$   and     connections
$D_{a^k}=(S_i^k)^*(D_{A^k})=d+a_i^k(t)$ such that
\begin{equation}\label{4.1}
 d^*  a_i^k(t)=0 \quad \mbox {in } U_i, \quad a_i(t)\cdot \nu=0 \mbox { on }\partial  U_i
 \end{equation}
for all $t\in [0, t_1]$,
and  $D_{a_i^k}$ are    solutions of the equation
\begin{eqnarray}\label{4.2}
    \pfrac{a_i^k}{t}=-D_{a_i^k}^*F_{a_i^k}+D_{a_i^k} s_i^k
    \end{eqnarray}
   in $U_i\times [0,t_1]$, where \[s_i^k(t)=(S_i^k)^{-1}(t)\circ \frac d{dt} S_i^k(t),\quad \int_{U_i}s_i^k(t) \,dv=0.  \]
   For all $i=1,\cdots,L$, we have
  \begin{equation}\label{4.3}
    \int_{U_i}r_0^{-2} \abs{a^k_i(t)}^2 +\abs{\nabla  a^k_i(t)}^2 dv\leq C\int_{U_i}|F_{a^k_i(t)}|^2 dv\leq C\varepsilon
     \end{equation}
  and
  \begin{align}\label{4.4}
     \int_0^{t_1} \int_{U_i} \abs{\nabla^2a_i^k }^2 + | \pfrac{a_i^k}{t}|^2+|\nabla s_i^k|^2dvdt
    \leq   C \int_0^{t_1}\int_{U_i} |\nabla_{a_i^k}F_{a_i^k}|^2+|F_{a_i^k}|^2\,dv\,dt.
  \end{align}
By Lemma \ref{Lemma 4.1}, we have
\begin{equation}\label{4.15'}
\begin{aligned}
&t\int_M
\left|\nabla_{A^k(t)}F_{A^k(t)}\right|^2\,dv+
\int_0^t\tau\int_M
\left|\nabla_{A^k(\tau)}^2F_{A^k(\tau)}\right|^2\,dv\,d\tau\\
&\qquad\leq
\left[1+C(1+r_0^{-2})t\right]
\int_0^t\int_M
\left|\nabla_{A^k(\tau)}F_{A^k(\tau)}\right|^2\,dv\,d\tau+
Ct^2\int_M|F_{A_0}|^2\,dv.
\end{aligned}
\end{equation}
Using Lemma \ref{Lemma 2}, the Lebesgue continuity implies
\begin{equation}\label{4.16'}
t\sum_{k=1}^2\int_M
\left|\nabla_{A^k(t)}F_{A^k(t)}\right|^2\,dv \rightarrow0, \quad
\qquad\text{as }t\rightarrow0.\\
\end{equation}
 We set $b_i=a_i^2-a_i^1$ and $\sigma_i =s_i^2-s_i^1$ in $U_i$. The main idea of the proof is toobtain interior estimates using integration by parts and then convert boundary estimates to interior estimates using the transition behaviors. We choose a collection of cut-off functions $\phi_i\in C^\infty_0(U_i)$ with $0\leq  \phi_i\leq 1$ and $|\nabla \phi_i| \leq \frac{C}{r_0}$, such that every point in the transition region with $0\leq \phi_i<1$ has a neighborhood in which some $\phi_j$ is identically $1$. For simiplicity, we denote
\begin{equation}
\begin{aligned}
\mathcal E(t)
&:=\sum_i\int_{U_i}
|F_{a_i^2}-F_{a_i^1}|^2\,dv,\\
\mathcal E_\phi(t)
&:=\sum_i\int_{U_i}
|F_{a_i^2}-F_{a_i^1}|^2\phi_i^2\,dv,\\
\mathcal D(t)
&:=\sum_i\int_{U_i}
|\nabla_{a_i^1}(F_{a_i^2}-F_{a_i^1})|^2\,dv,\\
\mathcal D_\phi(t)
&:=\sum_i\int_{U_i}
|\nabla_{a_i^1}(F_{a_i^2}-F_{a_i^1})|^2
\phi_i^2\,dv,\\
\mathcal P(t)
&:=\sum_i\int_{U_i}|d\sigma_i|^2\,dv.
\end{aligned}
\end{equation}
Using \eqref{4.15'}, \eqref{4.16'}, Lemma \ref{Lemma 4.2} and Sobelev's inequality, we have
\begin{equation}\label{4.11}
\begin{aligned}
K(t)&:=\sum_{i,k}\|F_{a_i^k}(t)\|_{L^4(U_i)}^2 \in L^1(0,T), \quad tK(t)\rightarrow 0\\
\Lambda(t)&=\sum_{i,k}
\|\nabla_{a_i^k}F_{a_i^k}(t)\|_{L^4(U_i)}^2,\quad  \int^T_0 t \Lambda(t) dt< \infty.
\end{aligned}
\end{equation}
 We have \begin{equation}\label{4.12}
\begin{aligned}
db_i
&=F_{a_i^2}-F_{a_i^1}
-a_i^1\wedge b_i-b_i\wedge a_i^2, \quad d^{*}b_i=0, \text{ in } U_i\\
&\qquad b_i\cdot\nu=0 \text{ on } \partial U_i.
\end{aligned}
\end{equation}
Using Lemma \ref{L5} and the the Sobelev's inequality, we get
\begin{equation}\label{4.13}
r_0^{-1}\|b_i\|_{L^2(U_i)}
+\|\nabla b_i\|_{L^2(U_i)}+\|b_i\|_{{L^4(U_i)}}
\le C\|F_{a_i^2}-F_{a_i^1}\|_{L^2(U_i)}.
\end{equation}
Since $\sigma_i$ has average zero in $U_i$, it follows from   Sobelev's inequality and Poincare's inequality   that
\begin{equation}
\|\sigma_i\|_{L^4(U_i)}
\le C\|d\sigma_i\|_{L^2(U_i)}.
\end{equation}
We have
\begin{equation}\label{4.14}
\begin{aligned}
\frac{\partial b_i}{\partial t}-d\sigma_i
={}&-\left(
D_{a_i^2}^{*}F_{a_i^2}
-D_{a_i^1}^{*}F_{a_i^1}
\right)+[a_i^1,\sigma_i]+[b_i,s_i^2].
\end{aligned}
\end{equation}
Using \eqref{4.12}, we get
\begin{equation}
\int_{U_i}
\langle \frac{\partial b_i}{\partial t},d\sigma_i\rangle\,dv=0.
\end{equation}
This implies that
\begin{equation}
\begin{aligned}
&\quad ||\frac{\partial b_i}{\partial t}||^2_{L^2(U_i)}+||d\sigma_i||^2_{L^2(U_i)} =||\frac{\partial b_i}{\partial t}-d\sigma_i||^2_{L^2(U_i)}\\
&\leq ||D_{a_i^2}^{*}F_{a_i^2}
-D_{a_i^1}^{*}F_{a_i^1}||^2_{L^2(U_i)}+C||a^1_i||^2_{L^4(U_i)}||d\sigma_i||^2_{L^2(U_i)}\\
&\quad +C||b_i||^2_{L^4(U_i)}||s^2_i||^2_{L^4(U_i)}.
\end{aligned}
\end{equation}
Taking $\varepsilon$ sufficient small and using \eqref{4.13}, we obtain
\begin{equation}\label{4.23.1}
\begin{aligned}
&\quad ||\frac{\partial b_i}{\partial t}||^2_{L^2(U_i)}+||d\sigma_i||^2_{L^2(U_i)}\\
&\leq C||\nabla_{a^1_i}(F_{a^2_i}-F_{a^1_i})||^2_{L^2(U_i)}+C(||F_{a^2_i}||^2_{L^4(U_i)}+||s^2_i||^2_{L^4(U_i)})||F_{a^2_i}-F_{a^1_i}||^2_{L^2(U_i)}.
\end{aligned}
\end{equation}
Summing over $i$, we get
\begin{equation}\label{4.18}
\sum_i\|\frac{\partial b_i}{\partial t}\|_{L^2(U_i)}^2+\mathcal P(t)
\le C\mathcal D(t)+Ch(t)\mathcal E(t),
\end{equation}
where $
h(t):={}1+r_0^{-2}+K(t)
+\sum_{i,k}\left(
\|\nabla_{a_i^k}F_{a_i^k}(t)\|_2^2
+\|s_i^k(t)\|_4^2
\right)$.     Noting \eqref{4.4} and \eqref{4.11}, we have $h\in L^1[0,T]$.

Using the Bianchi idenity and the Weitzenbock formula, we obtain
\begin{equation}
\begin{aligned}
&\partial_t(F_{a_i^2}-F_{a_i^1})
+[s_i^1,F_{a_i^2}-F_{a_i^1}]
+[\sigma_i,F_{a_i^2}]\\
={}&-\nabla_{a_i^2}^{*}\nabla_{a_i^2}F_{a_i^2}
+\nabla_{a_i^1}^{*}\nabla_{a_i^1}F_{a_i^1}\\
&+F_{a_i^2}\#F_{a_i^2}
-F_{a_i^1}\#F_{a_i^1}
+\operatorname{Rm}\#(F_{a_i^2}-F_{a_i^1}).
\end{aligned}
\end{equation}
Multiplying by $(F_{a^2_i}-F_{a^1_i})\phi_i^2$, integrating over $U_i$ and noting that $\<[B,A],A\>=0$, we get
\begin{equation}\label{4.20}
\begin{aligned}
&\frac12\frac{d}{dt}
\int_{U_i}|F_{a_i^2}-F_{a_i^1}|^2\phi_i^2\,dv+\int_{U_i}
|\nabla_{a_i^1}(F_{a_i^2}-F_{a_i^1})|^2
\phi_i^2\,dv\\
\le{}&
C(1+r_0^{-2})\|F_{a_i^2}-F_{a_i^1}\|_2^2+C\int_{U_i}|b_i|^2|F_{a_i^2}|^2\phi_i^2\,dv\\
&+C\int_{U_i}
|b_i||\nabla_{a_i^2}F_{a_i^2}|
|F_{a_i^2}-F_{a_i^1}|\phi_i^2\,dv+C\int_{U_i}
(|F_{a_i^1}|+|F_{a_i^2}|)
|F_{a_i^2}-F_{a_i^1}|^2\phi_i^2\,dv\\
&+C\int_{U_i}
|\sigma_i||F_{a_i^2}|
|F_{a_i^2}-F_{a_i^1}|\phi_i^2\,dv.
\end{aligned}
\end{equation}
Using   Sobelev's inequality and Kato's inequality, we have
\begin{equation}
\begin{aligned}
\|\phi_i(F_{a_i^2}-F_{a_i^1})\|_4
\le C\Big(
&\|\phi_i\nabla_{a_i^1}(F_{a_i^2}-F_{a_i^1})\|_2+r_0^{-1}\|F_{a_i^2}-F_{a_i^1}\|_2
\Big).
\end{aligned}
\end{equation}
We can then estimate the right hand side   of \eqref{4.20} by
\begin{equation}
\begin{aligned}
&\int_{U_i}
|\sigma_i||F_{a_i^2}|
|F_{a_i^2}-F_{a_i^1}|\phi_i^2\,dv\\
&\le
\|\sigma_i\|_{L^4(U_i)}\|F_{a_i^2}\|_{L^2(U_i)}
\|\phi_i(F_{a_i^2}-F_{a_i^1})\|_{L^4(U_i)}\\
&\le
\eta\|\phi_i\nabla_{a_i^1}(F_{a_i^2}-F_{a_i^1})\|_{L^2(U_i)}^2
+C_\eta\varepsilon\|d\sigma_i\|_{L^2(U_i)}^2
+Cr_0^{-2}\|F_{a_i^2}-F_{a_i^1}\|_{L^2(U_i)}^2
\end{aligned}
\end{equation}
and
\begin{equation}\label{4.23}
\begin{aligned}
&\int_{U_i}
|b_i||\nabla_{a_i^2}F_{a_i^2}|
|F_{a_i^2}-F_{a_i^1}|\phi_i^2\,dv\\
&\qquad\le
\eta\|\phi_i\nabla_{a_i^1}(F_{a_i^2}-F_{a_i^1})\|_{L^2(U_i)}^2
\\ &+C_\eta\left(
r_0^{-2}
+\|\nabla_{a_i^2}F_{a_i^2}\|_{L^2(U_i)}^2
\right)
\|F_{a_i^2}-F_{a_i^1}\|_{L^2(U_i)}^2
\end{aligned}
\end{equation}
for all $\eta > 0$. Combining \eqref{4.20}-\eqref{4.23} and summing over $i$, we obtain
\begin{equation}\label{4.24}
\begin{aligned}
\mathcal E_\phi(t)
+\int_0^t\mathcal D_\phi(\tau)\,d\tau
\le{}&
C\int_0^t h(\tau)\mathcal E(\tau)\,d\tau+C\varepsilon
\int_0^t\mathcal P(\tau)\,d\tau.
\end{aligned}
\end{equation}
This is the desired interior estimate. Now, we   estimate the transition terms from $U_i$ to $U_{j}$, which  allows us to get a full estimate on each $U_i$. In the  intersection $U_i\cap U_{j}$, there exist a gauge transformation $S_{ij; k}(t)$  (see Lemma 3.5 of \cite{U2} or \cite{Se})   such that
\begin{eqnarray}
 a^k_{j}(t)= {S}^{-1}_{ij; k}(t)  d S_{ij; k}(t) + S^{-1}_{ij; k}(t) a^k_{i}(t)S_{ij; k}(t).
 \end{eqnarray}
Moreover, we have
\begin{equation}\label{g.1}
D_{a^{k}_j}=S_{ij; k}^*(D_{a^{k}_i})=S_{ij; k}^{-1}\circ D_{a^{k}_i} \circ S_{ij; k},\quad s^{k}_j=S^{-1}_{ij; k}  s^{k}_i S_{ij;k}+S_{ij;k}^{-1}\circ \frac d{dt} S_{ij;k}.
\end{equation}
Morover,
\begin{equation} \label{g.1.1}
\begin{aligned}
D_{a_j^k}s_j^k
&=D_{a_j^k}\!\left(
S_{ij;k}^{-1}s_i^kS_{ij;k}
+S_{ij;k}^{-1}\frac{\partial S_{ij;k}}{\partial t}
\right) \\
\end{aligned}
\end{equation}
and
\begin{equation} \label{f.1.1}
F_{a^{k}_j}=S_{ij;k}^{-1} F_{a^{k}_i} S_{ij;k}. \end{equation}
We deduce
\begin{equation}
\begin{aligned}
\frac{\partial a_j^k}{\partial t}
&=S_{ij;k}^{-1}\frac{\partial a_i^k}{\partial t}S_{ij;k}
+D_{a_j^k}\!\left(
S_{ij;k}^{-1}\frac{\partial S_{ij;k}}{\partial t}
\right)\\
&=-S_{ij;k}^{-1}(D_{a_i^k}^{*}F_{a_i^k})S_{ij;k}
+S_{ij;k}^{-1}(D_{a_i^k}s_i^k)S_{ij;k}\\
&\qquad+D_{a_j^k}\!\left(
S_{ij;k}^{-1}\frac{\partial S_{ij;k}}{\partial t}
\right)\\
&=-D_{a_j^k}^{*}F_{a_j^k}
+D_{a_j^k}\!\left(
S_{ij;k}^{-1}s_i^kS_{ij;k}
+S_{ij;k}^{-1}\frac{\partial S_{ij;k}}{\partial t}
\right)\\
&=-D_{a_j^k}^{*}F_{a_j^k}+D_{a_j^k}s_j^k,
\end{aligned}
\end{equation}
 which implies that  equation  \eqref{4.2} is   well-defined in $U_i\cap U_j$.

In $U_i\cap U_j$,   we set $H_{ij}:=S_{ij;2}-S_{ij;1}$. Using \eqref{g.1}, we obtain
 \begin{equation}
\frac{\partial H_{ij}}{\partial t}
=H_{ij}s_j^2-s_i^2H_{ij}
+S_{ij;1}\sigma_j-\sigma_iS_{ij;1},\quad H_{ij}(0)=0.
\end{equation}
Since $S_{ij;k}\in G\subset SO(l)$, $H_{ij}^TH_{ij}$ is symmetric. Indeed,
\begin{equation}\label{4.30}
\begin{aligned}
H_{ij}^{T}H_{ij}
&=(S_{ij;2}^{T}-S_{ij;1}^{T})
  (S_{ij;2}-S_{ij;1})\\
&=2I-S_{ij;2}^{T}S_{ij;1}
     -S_{ij;1}^{T}S_{ij;2}.
\end{aligned}
\end{equation}
Moreover, since $s^2_i,s^2_j \in \mathfrak{g}\subset \mathfrak{so}(l)$, we get
\begin{equation}\label{4.31}
    \<H_{ij},H_{ij}s^2_j\>=0, \quad \<s^2_i H_{ij},H_{ij}\>=0.
\end{equation}
Using \eqref{4.30} and \eqref{4.31}, we get
\begin{equation}\label{4.32}
|H_{ij}(x,t)|
\le
\int_0^t
\bigl(
|\sigma_i(x,\tau)|+|\sigma_j(x,\tau)|
\bigr)\,d\tau.
\end{equation}
It follows from    Minkowski’s inequality and Poincare's inequality   that
\begin{equation}\label{4.33}
\begin{aligned}
\|H_{ij}(t)\|_{L^4(U_i\cap U_j)}
&\le
\int_0^t
\left(
\|\sigma_i(\tau)\|_{L^4(U_i\cap U_j)}
+\|\sigma_j(\tau)\|_{L^4(U_i\cap U_j)}
\right)\,d\tau\\
&\le
\int_0^t
\left(
\|\sigma_i(\tau)\|_{L^4(U_i)}
+\|\sigma_j(\tau)\|_{L^4(U_j)}
\right)\,d\tau\\
&\le C\sqrt t\Bigg[
\int_0^t\Big(
\|d\sigma_i(\tau)\|_{L^2(U_i)}^2+\|d\sigma_j(\tau)\|_{L^2(U_j)}^2
\Big)\,d\tau
\Bigg]^{1/2}.
\end{aligned}
\end{equation}
Similarly, we denote $\mathcal H(t):=\sum_{U_i\cap U_j\neq \emptyset}
\|H_{ij}(t)\|_{L^4(U_i\cap U_j)}^2$. Summing \eqref{4.33} over $i$ and $j$, we obtain
\begin{equation}\label{4.34}
\mathcal H(t)
\le Ct\int_0^t\mathcal P(\tau)\,d\tau.
\end{equation}
This implies that
\begin{equation}\label{4.43'}
\begin{aligned}
\int_0^t\Lambda(\tau)\mathcal H(\tau)\,d\tau
&\leq
C\int_0^t\tau\Lambda(\tau)
\left(\int_0^\tau\mathcal P(s)\,ds\right)d\tau\\
&\leq
C\left(\int_0^t\mathcal P(\tau)\,d\tau\right)
\left(\int_0^t\tau\Lambda(\tau)\,d\tau\right).
\end{aligned}
\end{equation}
Using \eqref{f.1.1}, we have
\begin{equation}\label{4.35}
\begin{aligned}
&\quad F_{a_j^2}-F_{a_j^1}\\
&=S_{ij;1}^{-1}
(F_{a_i^2}-F_{a_i^1})S_{ij;1}+S_{ij;2}^{-1}F_{a_i^2}S_{ij;2}
-S_{ij;1}^{-1}F_{a_i^2}S_{ij;1}
\end{aligned}
\end{equation}
and
\begin{equation}\label{4.36}
\begin{aligned}
&\nabla_{a_j^2}F_{a_j^2}
-\nabla_{a_j^1}F_{a_j^1}\\
={}&S_{ij;1}^{-1}
\left(
\nabla_{a_i^2}F_{a_i^2}
-\nabla_{a_i^1}F_{a_i^1}
\right)S_{ij;1}+S_{ij;2}^{-1}
(\nabla_{a_i^2}F_{a_i^2})S_{ij;2}-S_{ij;1}^{-1}
(\nabla_{a_i^2}F_{a_i^2})S_{ij;1}.
\end{aligned}
\end{equation}
Using \eqref{4.35}, we have
\begin{equation}\label{4.37}
\begin{aligned}
\mathcal E(t)&\leq C\sum_i \int_{U_i}|F_{a^2_i}-F_{a^1_i}|^2\phi_i^2 dv+C\sum_{U_i\cap U_j\neq \emptyset}\int_{U_i\cap U_j}|H_{ij}|^2|F_{a^2_i}|^2dv\\
&\leq C\mathcal E_\phi(t)+CK(t)\mathcal H(t).\\
\end{aligned}
\end{equation}
Similarly, using \eqref{4.36}, we get
\begin{equation}\label{4.54}
\begin{aligned}
\mathcal D(t)&\leq C\mathcal D_\phi(t)+C \sum_i \int_{U_i}|b_i|^2|F_{a^2_i}|^2 dv+C\sum_{U_i\cap U_j\neq \emptyset}\int_{U_i\cap U_j}|H_{ij}|^2|\nabla_{a^2_i}F_{a^2_i}|^2 dv\\
&\le C\mathcal D_\phi(t)
+CK(t)\mathcal E(t)+C\Lambda(t)\mathcal H(t).
\end{aligned}
\end{equation}
Integrating \eqref{4.54}  in $t$   and applying \eqref{4.43'}, we obtain
\begin{equation}\label{6.54}
\begin{aligned}
\int_0^t\mathcal D(\tau)\,d\tau
\le{}&
C\int_0^t\mathcal D_\phi(\tau)\,d\tau
+C\int_0^t K(\tau)\mathcal E(\tau)\,d\tau\\
&+C
\left(\int_0^t\tau\Lambda(\tau)\,d\tau\right)
\left(\int_0^t\mathcal P(\tau)\,d\tau\right).
\end{aligned}
\end{equation}
Combining \eqref{6.54} with \eqref{4.18}, we obtain
\begin{equation}
\begin{aligned}
\int_0^t\mathcal P(\tau)\,d\tau
\le{}&
C\int_0^t\mathcal D_\phi(\tau)\,d\tau
+C\int_0^t h(\tau)\mathcal E(\tau)\,d\tau\\
&+C
\left(\int_0^t\tau\Lambda(\tau)\,d\tau\right)
\left(\int_0^t\mathcal P(\tau)\,d\tau\right).
\end{aligned}
\end{equation}
Since $\tau\Lambda(\tau)\in L^1(0,T)$, it follows from the Lebesgue continuity that
\begin{equation}\label{6.57}
\begin{aligned}
\int_0^t\mathcal P(\tau)\,d\tau
\le{}&
C\int_0^t\mathcal D_\phi(\tau)\,d\tau+C\int_0^t h(\tau)\mathcal E(\tau)\,d\tau
\end{aligned}
\end{equation}
for $t>0$ sufficiently small. Substituting \eqref{6.57} into the interior estimate \eqref{4.24}, we obtain
\begin{equation}\label{4.42}
\mathcal E_\phi(t)
+\int_0^t\mathcal D_\phi(\tau)\,d\tau
\le C\int_0^t h(\tau)\mathcal E(\tau)\,d\tau.
\end{equation}
It follows that
\begin{equation}\label{4.59}
\int_0^t\mathcal P(\tau)\,d\tau
\le C\int_0^t h(\tau)\mathcal E(\tau)\,d\tau.
\end{equation}
Combining \eqref{4.59}   with \eqref{4.34}  and \eqref{4.42}, we have
\begin{equation}
\mathcal E(t)
\le C\left(1+tK(t)\right)
\int_0^t h(\tau)\mathcal E(\tau)\,d\tau.
\end{equation}
Since $h(t)\in L^1[0,T]$, $tK(t)\in L^\infty(0,T)$ and $\mathcal E(0)=0$, it follows from Gronwall’s inequality that
$
\mathcal E(t)=0.$ Then,  using \eqref{4.13} and \eqref{4.23.1}, we have $b_i=0$ and $\sigma_i=0$ in each $U_i$, so we prove uniqueness of the pair $(a^k_i,s^k_i)$.

Finally, we recover uniqueness of weak solutions  $A^k(t)$ of the Yang-Mills flow. Since $\sigma_i=0$, we obtain
\begin{equation}
\begin{aligned}
\frac{\partial}{\partial t}(S_i^2-S_i^1)
&=(S_i^2-S_i^1)s_i^1
  +S_i^2\sigma_i\\
&=(S_i^2-S_i^1)s_i^1,
\end{aligned}
\end{equation}
with $S^2_i(0)=S^1_i(0)$. By the same linear algebra argument as one in \eqref{4.31}, we have
\begin{equation}
\begin{aligned}
\frac12\frac{d}{dt}
\|S_i^2-S_i^1\|_{L^2(U_i)}^2
&=\int_{U_i}
 \!\left<
S_i^2-S_i^1, \, (S_i^2-S_i^1)s_i^1
\right>\,dv=0.
\end{aligned}
\end{equation}
This implies that $S^1_i(t)=S^2_i(t)$ and we conclude that $A^1(t)=A^2(t)$ in each $U_i$.
\end{proof}


\begin{thebibliography}{}

\bibitem{AB}
 M. Atiyah and R. Bott,
    \newblock{The Yang-Mills equations over Riemann
surfaces},
    \newblock{Phil. Trans. Roy. Soc. London A}, {\bf   308} (1982),   524--615.




\bibitem{CCH} J, Casey,  C. H. Chan and M.-C. Hong,      \newblock{The Yang-Mills flow in   vector bundles over higher dimensional Riemannian manifolds},     \newblock{Preprint}.



\bibitem{D1} S. K. Donaldson,       \newblock{An application of gauge
theory to four dimensional topology},     \newblock{J. Differential Geom. } {\bf 18}
(1983)  269-278.

    \bibitem{D2}
 S. K. Donaldson,
    \newblock{Anti-self-dual
Yang-Mills connections on complex algebraic surfaces and stable
vector bundles},
    \newblock{Proc. Lond. Math. Soc.}  {\bf 50} (1985)    1-26.









\bibitem{F} K. O. Friedrichs,
    \newblock{Differential forms on Riemannian manifolds},
    \newblock{Comm. Pure Appl. Math. } {\bf 8} (1955)     551–590.





\bibitem{G1}
M.   Gaffney,
    \newblock{The harmonic operator for exterior differential forms},
    \newblock{Proceedings of the National
Science, U.S.A., } {\bf 37} (1951)     48-50.
\bibitem{Ga}
A. Gastel,
Nonuniqueness for the Yang--Mills heat flow,
J. Differential Equations \textbf{187} (2003), no. 2, 391--411.

\bibitem{HT1}
 M.-C. Hong and G. Tian,
    \newblock{Asymptotical behaviour of the Yang-Mills
flow and singular Yang-Mills connections},
    \newblock{Math. Ann.}  {\bf 330} (2004)   441--472.





    \bibitem{HTY} M.-C. Hong,  G. Tian   and H. Yin, \newblock {The Yang-Mills $\alpha$-flow in vector bundles over four manifolds and its applications,} \newblock  {Comm. Math. Helv.}    {\bf 90} (2015)
    75--120.








 \bibitem{Mo}
    C. B. Morrey,
        \newblock{ Multiple integrals in the calculus of variations,}
        \newblock{Springer-Verlag, Berlin, 1966.}


     \bibitem{Ra}
 J. Rade,
    \newblock{On the Yang-Mills heat equation in two and three dimensions},
    \newblock{J. Reine Angew. Math.}    {\bf 431 }  (1992)  123-163.


\bibitem{R} T. Rivi\`{e}re, \newblock {Interpolation spaces and energy quantization for Yang-Mills fields}, \newblock  Comm. Anal. Geom. {\bf 10} (2002) 683-708.





\bibitem{Se}
 S. Sedlacek,
    \newblock{The Yang-Mills functional over $4$-manifolds},
    \newblock{Commun. Math. Phys.}, {\bf    86} (1982)    515-527.


\bibitem{Si}
  C. T. Simpson,
    \newblock{Constructing variations of Hodge structure
using Yang-Mills theory and applications to uniformization},
    \newblock{J. Amer. Math. Soc.}  {\bf   1} (1988)   867-918.

 \bibitem{Sib}
B. Sibley,
    \newblock{Asymptotics of the Yang-Mills flow for holomorphic vector bundles over K\"ahler manifolds: The canonical
structure of the limit.},
    \newblock{J. Reine Angew. Math.}    {\bf 706}  (2015)  123-191.







\bibitem{St2}
 M. Struwe,
    \newblock{The Yang-Mills flow in four
dimensions:},
    \newblock{Calc. Var. $\&$ PDEs}, {\bf 2} (1994)   123--150.






\bibitem{TTi}
 T. Tao and G. Tian,
    \newblock{A singularity
removal theorem for Yang-Mills fields in higher dimensions},
    \newblock{J. Amer. Math. Soc.}, {\bf   17} (2004)    557--593.

\bibitem{Ti}
 G. Tian,
    \newblock{Gauge theory and calibrated geometry, I},
    \newblock{Ann.  Math.}, {\bf 151} (2000)  193--268.

\bibitem{U1}
 K. Uhlenbeck,
    \newblock{Removable singularities in
Yang-Mills fields},
    \newblock{Commun. Math. Phys.}, {\bf 83} (1982)    11--30.


    \bibitem{U2}
K. Uhlenbeck,
    \newblock{Connections with $L^p$ bounds on
curvature},
    \newblock{ Commun. Math. Phys.}, {\bf 83 } (1982)  31--42.



 \bibitem{W}
A.  Waldron,
    \newblock{Instantons and  singularities in
Yang-Mills flow},
     \newblock{Calc. Var. $\&$ PDEs}  {\bf   55}        (2016)  no. 5, Art. 113, 31pp.



\end{thebibliography}
\end{document}